\tikzstyle{littledot}=[circle, fill, inner sep=.4pt,minimum size=.4pt]
\newtheorem{thm}{Theorem}[section]
\newtheorem{lem}[thm]{Lemma}
\newtheorem{prop}[thm]{Proposition}
\newtheorem{cor}[thm]{Corollary}
\newtheorem{defn}[thm]{Definition}
\newtheorem{remark}[thm]{Remark}
\newcommand{\R}{{\mathbb R}}
\newcommand{\C}{{\mathbb C}}
\newcommand{\Pin}{\operatorname{Pin}}
\newcommand{\Spin}{\operatorname{Spin}}
\newcommand{\End}{\operatorname{End}}
\newcommand{\Pic}{\operatorname{Pic}}
\newcommand{\Aut}{\operatorname{Aut}}
\newcommand{\rk}{\operatorname{rk}}
\newglossaryentry{H}
{
    name=\ensuremath{H},
    description={a Hermitian metric on a complex vector space $V$}
}
\newglossaryentry{Clifford multiplication}
{
    name=\ensuremath{\hat{\rho}:\C_{q}(V)_{\mathbb{Z}}\rightarrow \End(S_{\Delta})},
    description={Clifford multiplication on our spinor torus $S_{\Delta}$}
}
\newglossaryentry{(V,q)}
{
    name=\ensuremath{(V,q)},
    description={a quadratic vector space with a form $q$ or $Q$}
}
\newglossaryentry{first Chern class}
{
    name=\ensuremath{c_1(L)},
    description={the first Chern class of a positive definite line bundle $L$}
}
\newglossaryentry{rho^f}
{
    name=\ensuremath{\rho^f},
    description={composition of Clifford multiplication with the adjoint conjugation of the isomorphism $f:S_{\Delta}\xrightarrow{\cong} 
E_{i}^{\times 2^{k}}$}
}
\newglossaryentry{Vector space}
{
    name=\ensuremath{V},
    description={a vector space over $\R$ or $\C$}
}
\newglossaryentry{PPAV}
{
    name=PPAV,
    description={a principally polarized Abelian variety}
}
\newglossaryentry{L}
{
    name=\ensuremath{L},
    description={a positive definite line bundle in $\Pic^{H}(V/\Gamma)$}
}
\newglossaryentry{Clifford group}
{
    name=\ensuremath{\Gamma_{q}(V)},
    description={the Clifford group of the Clifford algebra $\C_{q}(V)$}
}
\newglossaryentry{line bundles}
{
    name=\ensuremath{\Pic(S_{\Delta})},
    description={the variety of line bundles on $S_{\Delta}$}
}
\newglossaryentry{covering space}
{
    name=\ensuremath{T_{0}S_{\Delta}},
    description={the covering space for $S_\Delta$}
}
\newglossaryentry{E}
{
    name=\ensuremath{E},
    description={the alternating $(1,1)$ form that is the imaginary part of $H$ on $V$, $E = \mathrm{Im} H$}
}
\newglossaryentry{degree 0 line bundles}
{
    name=\ensuremath{\Pic^{0}(S_{\Delta})},
    description={the group of degree $0$ line bundles, vanishing $c_{1}(L_{\Delta})$}
}
\newglossaryentry{C(2^k)}
{
    name=\ensuremath{\mathbb{C}(2^k)},
    description={the matrix algebra of $2^{k}\times 2^{k}$ complex matrices}
}
\newglossaryentry{C_q(V)}
{
    name=\ensuremath{C_{q}(V)},
    description={the Clifford algebra of a quadratic vector space $V$ with a quadratic form $q$}
}
\newglossaryentry{(Delta, H)}
{
    name=\ensuremath{(\Delta,H)},
    description={a unitary spinor module, where $H$ is the positive definite Hermitian form associated with the chosen anti-involution $*$}
}
\newglossaryentry{principal polarization}
{
    name=\ensuremath{L_{\Delta}},
    description={the principal polarization for $S_{\Delta}$}
}
\newglossaryentry{spinor Abelian variety}
{
    name=\ensuremath{S_{\Delta}},
    description={the spinor Abelian variety associated to the spinor module $\Delta$}
}
\newglossaryentry{half spinor}
{
    name=\ensuremath{\Delta^{+}} and \ensuremath{\Delta^{-}},
    description={the half spinor modules associated with $\Delta$}
}
\newglossaryentry{complex torus}
{
    name=\ensuremath{V\slash \Gamma},
    description={a complex torus formed by the quotient of $V$ by a discrete lattice $\Gamma$}
}
\newglossaryentry{Gamma}
{
    name=\ensuremath{\Gamma},
    description={a lattice of full rank in a complex vector space $V$}
}
\newglossaryentry{complexification}
{
    name=\ensuremath{\mathbb{C}_{q}(V)},
    description={the complexification of $C_{q}(V)$}
}
\newglossaryentry{Delta}
{
    name=\ensuremath{\Delta},
    description={a unitary spinor module for the Clifford algebra $\C_{q}(V)$}
}
\newglossaryentry{C_2k}
{
    name=\ensuremath{\mathbb{C}_{2k}},
    description={the complexification of the Clifford algebra associated to the quadratic space $\R^{2k}$ of signature $(0,2k)$}
}
\newglossaryentry{multiplicative generators, real}
{
    name=\ensuremath{\hat{\Gamma}_{q}(V)},
    description={the finite group of multiplicative generators of the Clifford algebra $C_{q}(V)$}
}
\newglossaryentry{Dirac spinor Abelian variety}
{
    name=\ensuremath{S_{\Delta_{2k}}},
    description={the Dirac spinor Abelian variety}
}
\newglossaryentry{Gamma_Delta}
{
    name=\ensuremath{\Gamma_\Delta},
    description={a full rank lattice in $\Delta$}
}
\newglossaryentry{product of elliptic curves}
{
    name=\ensuremath{E^{\times 2^{k}}_{i}},
    description={the product of $2^{k}$ copies of the elliptic curves $E_{i}=\dfrac{\C}{\mathbb{Z}\oplus i\cdot \mathbb{Z}}$}
}
\newglossaryentry{integer subring}
{
    name=\ensuremath{\mathbb{C}_{q}(V)_{\mathbb{Z}}},
    description={the integer subring of $\mathbb{C}_{q}(V)$}
}
\newglossaryentry{multiplicative generators}
{
    name=\ensuremath{\hat{\Gamma}^{c}_{q}(V)},
    description={the finite group of multiplicative generators of the Clifford algebra $\C_{q}(V)$}
}
\renewcommand*{\glossarymark}[1]{}
\title[Clifford multiplication on Spinor Abelian Varieties]{Clifford Multiplication on Spinor Abelian Varieties}
\author[I.\ Grzegorczyk and R.\ Su\'arez]{Ivona Grzegorczyk and Ricardo Su\'arez}
\address{Department of Mathematics, California State University Channel Islands; Natural Science Division, Pepperdine University}
\email{ivona.grzegorczyk@csuci.edu}
\email{josericardo.suarez@pepperdine.edu}
\keywords{Clifford algebras, Abelian varieties, spinor spaces, spin geometry}
\begin{document}

\normalsize

\begin{abstract} 
We define a spinor Abelian variety $S_{\Delta}$ to be a complex Abelian variety whose tangent space at the origin is a space of spinors for a suitable complex Clifford algebra $\mathbb{C}_{q}(V)$. We examine intrinsic properties of such varieties and 
the connection between Clifford multiplication and their endomorphism algebras. We then extend the analysis of Clifford multiplication to the dual torus $\Pic^{0}(S_{\Delta})$. 

\end{abstract}
\maketitle

\section{Introduction} 

Clifford algebras provide a unified mathematical framework for geometry and physics, and hence there is an increased interest in studying them in various contexts. They are crucial in constructing spinors that are used in describing elementary particles or in unifying Maxwell's equations.  Spinors play a major role as a tool in detecting parity changes when looking for hidden symmetries (supersymmetries) of spaces,  in quantum mechanics or general relativity.  The concept of algebraic spinors was introduced several decades ago by Chevalley and Cartan, who described their algebraic and geometric properties in \cite{Car,Ch}. An interesting result by Satake  in  \cite{Sat} links families of Abelian varieties with the even subalgebras of real Clifford algebras of total signatures greater than or equal to $2$  where $(p,q)\neq (1,1)$ by providing a suitable complex structure on these varieties  (usually given by a subalgebra generator whose square is negative). Using this idea, for any Clifford algebra of dimension $n$, we can generate a complex torus of dimension $2^{n-2}$. 

In this paper, we link complex Clifford algebras with certain Abelian varieties by constructing suitable complex spinor spaces. We focus on Abelian varieties obtained as quotients  \gls{complex torus} of a vector space \gls{Vector space} by   a full-rank lattice \gls{Gamma}, satisfying some Clifford algebra conditions. In Section \ref{Spinor Abelian Varieties}, we define Abelian varieties constructed this way as \textit{spinor Abelian varieties}, denoted by \gls{spinor Abelian variety}, and associated to the complex Clifford algebra $\C_{q}(V)$ with a complex spinor module \gls{Delta} (where $\Delta$ is a spinor space for the Clifford algebra). We  describe Clifford actions (called multiplications) on these varieties and study their properties. In Proposition \ref{dual spinor AV}, we show that for any spinor Abelian variety, its dual variety $\Pic^{0}(S_{\Delta})$ is also a spinor Abelian variety. Thus the Clifford multiplication associated with the spinor Abelian variety $S_{\Delta}$ also induces Clifford multiplication on $\Pic^{0}(S_{\Delta})$. Moreover, generators of the Clifford algebra $\C_{q}(V)$ are now in bijection with line bundles $L\in \Pic^{0}(S_{\Delta})$ that are either trivial or have the properties $L^{\otimes 2}\cong \mathcal{O}_{S_{\Delta}}$ or $L^{\otimes 4}\cong\mathcal{O}_{S_{\Delta}}$.

 We describe some intrinsic properties of spinor Abelian varieties resulting from an understanding of their endomorphism structure. For example, Lemma \ref{Losing hat} (\emph{Losing your hat lemma})  links Clifford multiplication, the representations of the associated Clifford algebra, and the analytic representations of $S_{\Delta}$. 
 
 In Theorem \ref{full decomposition}, using the structure of the endomorphism ring of our spinor Abelian varieties, we conclude that they are fully decomposable as the direct sum of $2^{k}$ (the dimension of $S_{\Delta}$) copies of an elliptic curve of $j$-invariant $1728$, which we denote $E_{i}$. As an immediate consequence, we can state that  $E_{i}^{\times 2^{k}}$ is itself a spinor Abelian variety with Clifford multiplication on $E_{i}^{\times 2^{k}}$ by $\C_{q}(V)_{\mathbb{Z}}$ induced from Clifford multiplication on $S_{\Delta}$. 

\printglossary[title=List of Symbols]

\section{Background on Abelian varieties and  Clifford algebras}
In this section, we start by providing some background in both Abelian varieties and Clifford algebras required to properly define spinor Abelian varieties and Clifford multiplication on them. For more information about complex Abelian varieties, see \cite{BL,GH}; and see \cite{LM,Me} for  properties of Clifford algebras.

\subsection{Complex Abelian varieties} 
\begin{defn}
Let $V$ be a finite-dimensional complex vector space. A \textbf{Hermitian metric} (or a positive definite Hermitian form) \gls{H} is a complex bi-additive map, $H:V\times V\rightarrow \C$, with the following properties:
\begin{enumerate}
\item $H$ is complex linear in the first argument. 
\item $H$ has conjugate symmetry, that is,   $H(v,w)=\overline{H(w,v)}$ for all $v,w\in V$.
\item $H$ is a positive definite real-valued quadratic form on $V$, where $H(v,v)\ge 0$ and $H(v,v)\in\R$  for all $v\in V$.

\end{enumerate}
A finite-dimensional complex vector space $V$ with a Hermitian metric $H$ is called a \textbf{Hermitian (or unitary) vector space}. 
\end{defn}
 For any Hermitian metric on $V$, the imaginary part, which we denote by \gls{E} (i.e.\ $E=\mathrm{Im} H$), is a real skew-symmetric form on $V$.
 
\begin{defn}
Let $V$ be a finite-dimensional complex vector space. A \textbf{lattice} $\Gamma$ in $V$ is a discrete subgroup such that the quotient $V/\Gamma$ is compact. That is, $\Gamma$ is a free Abelian group of full rank, i.e.\  $\rk\ \Gamma=\dim_{\R} V$. The quotient $V/\Gamma$ of the complex vector space $V$ by the lattice $\Gamma$ is called a \textbf{complex torus}.
    	
\end{defn}

\begin{defn}
 A complex torus $V/\Gamma$ is an \textbf{Abelian variety} if there exists a positive definite Hermitian form $H$ on $V$ such that the imaginary part $E=\mathrm{Im}  H$ of the Hermitian form is integral on the lattice $\Gamma\subset V$. Then the pair $(V/\Gamma,H)$ is called a  \textbf{polarized Abelian variety}.

\end{defn}
The following remark provides alternative, but equivalent, ways to define a polarization on a complex torus.

\begin{remark}\label{Polarization conditions}
    
\rm{1. One may also define a polarization on $V/\Gamma$ as a first Chern class $\gls{first Chern class}=H$ of a positive definite line bundle $\gls{L}\in \Pic^{H}(V/\Gamma)$, relating the positive definite Hermitian form on $V$ with our polarization. 

2. Alternatively, we can define a polarization as an alternating form $E:\Gamma\times\Gamma\rightarrow \mathbb{Z}$ acting on the lattice $\Gamma $ such that it gives an extension to real scalars, i.e.\ $\Gamma\otimes \R=V$, which is defined as $E: V\times V\rightarrow \R$ where  $E(iv,iw)=E(v,w)$ and $E(iv,v)>0$. These conditions are known as the\textbf{ Riemann relations}, and when Riemann relations are  satisfied by an alternating $(1,1)$ form $E$,  we obtain a related polarization on the Abelian variety.}
\end{remark}


The skew-symmetric form $E$  giving us  our polarization can be defined in some basis  $\gamma_1,\ldots ,\gamma_{2g}$ as a skew-symmetric matrix,  $E=\begin{array}{c} \left( \begin{array}{cc} 0_ {g\times g}& D \\ -D& 0_{g\times g}\\ \end{array} \right), \\ \end{array}$ where the diagonal matrix $ D=\operatorname{diag}\ (d_1,\ldots ,d_g)\in \mathbb{Z}_{\ge 0}^{g}$, and where the entries are ordered by the relation $d_i| d_{i+1}$.  This way the sequence $(d_1,\ldots ,d_g)$ is unique and defines a skew-symmetric form up to an isomorphism. Hence, the sequence  $D$  is called the type of the polarization
(see \cite[p.\ 70]{BL},\cite[p.\ 306]{GH}).

\begin{defn}
	Let $V$ be a finite-dimensional complex vector space. An Abelian variety $V/\Gamma$ with the polarization form $E$ is said to be \textbf{principally polarized} if the polarization type of $E$  is given by  $D=I_{g\times g}$. Equivalently, $V/\Gamma$ is a principally polarized Abelian variety if $det(E)=1$, for the form $E$ defining  the polarization of our Abelian variety. An Abelian variety with a principal polarization is called a \textbf{principally polarized Abelian variety}, which we denote by \gls{PPAV} hereafter. 
\end{defn}
Note that elliptic curves are PPAVs of dimension one over $\C$, and in this paper we consider  elliptic curves admitting complex multiplication.

\begin{defn}
An elliptic curve is said to have \textbf{complex multiplication} if its endomorphism ring  $\End(E)$ is strictly greater than $\mathbb{Z}$. 
\end{defn}
The elliptic curve  defined by the lattice spanned by $1$ and $i$, denoted $E_{i}=\dfrac{\C}{\mathbb{Z}\oplus i\cdot \mathbb{Z}}$, has the Gaussians  as its endomorphism ring (that is, $\End(E_{i})=\mathbb{Z}[i]$), and its automorphism group is the multiplicative group generated by $i\in\mathbb{C}$. When it comes to endomorphisms, we have that for any complex torus $V/\Gamma$ of dimension $n$, the endomorphism ring $\End_{\mathbb{Z}}(V/\Gamma)$  is a free $\mathbb{Z}$-module with the property that $\rk(\End_{\mathbb{Z}}(V/\Gamma))\leq 2 n^2$. When the endomorphism ring is of full rank, we have the following proposition (see \cite{Sh}).

\begin{prop}\label{Decomposition}
Let $V/\Gamma$ be a complex torus of dimension $n$. If the rank of the endomorphism ring is $2 n^2$, then $V/\Gamma$ is isogenous to the direct sum of $n$ copies of an elliptic curve $E$ with complex multiplication.
\end{prop}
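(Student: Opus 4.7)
\emph{Proof plan.} The plan is to combine Poincar\'e's complete reducibility theorem with the Albert classification of endomorphism algebras of simple abelian varieties, and then extract the extremal case via a short counting inequality.

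First, I would use that the rank hypothesis forces enough structure on $V/\Gamma$ to carry a polarization, so that it is genuinely an abelian variety (otherwise the endomorphism ring could not attain the upper bound $2n^2$). Applying Poincar\'e complete reducibility then produces an isogeny decomposition
\[
V/\Gamma \;\sim\; A_1^{n_1} \times \cdots \times A_r^{n_r},
\]
where the $A_i$ are pairwise non-isogenous simple abelian varieties of dimensions $g_i$, with $\sum_i n_i g_i = n$.

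Next, Schur's lemma applied to morphisms between the simple factors yields
\[
\End_{\Q}(V/\Gamma) \;\cong\; \prod_{i=1}^{r} M_{n_i}(D_i), \qquad D_i := \End_{\Q}(A_i),
\]
with each $D_i$ a division $\Q$-algebra. The rank hypothesis then reads $\sum_i n_i^2 \dim_{\Q}(D_i) = 2n^2$. The key input from the Albert classification (which I would cite from \cite{Sh}) is the bound $\dim_{\Q}(D_i) \leq 2 g_i$ for every simple complex abelian variety factor. Combining this with $g_i \leq g_i^2$ and the elementary inequality $\sum_i n_i^2 g_i^2 \leq \bigl(\sum_i n_i g_i\bigr)^2$, I would chain
\[
2n^2 \;=\; \sum_i n_i^2 \dim_{\Q}(D_i) \;\leq\; 2\sum_i n_i^2 g_i \;\leq\; 2\sum_i n_i^2 g_i^2 \;\leq\; 2\bigl(\textstyle\sum_i n_i g_i\bigr)^2 \;=\; 2n^2.
\]

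Equality throughout then collapses the decomposition: it forces exactly one nonzero index, with $g_1 = 1$, $n_1 = n$, and $\dim_{\Q}(D_1) = 2$. Hence $V/\Gamma$ is isogenous to $E^{\times n}$ for a single elliptic curve $E := A_1$ whose rational endomorphism algebra is a two-dimensional division $\Q$-algebra, which must be an imaginary quadratic field, i.e.\ $E$ has complex multiplication. The main obstacle is the sharp bound $\dim_{\Q}(D_i) \leq 2 g_i$ for a simple factor: this is the nontrivial structural content of Albert's classification, which I would quote rather than reprove; everything else is a clean isogeny decomposition followed by an equality-in-Cauchy--Schwarz collapse to a single elliptic factor.
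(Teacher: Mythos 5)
The paper itself gives no proof here: it just cites Shimizu \cite{Sh}, \emph{On complex tori with many endomorphisms}, so the only comparison available is with the argument that reference supplies. Your sketch has a real gap at the very first step. The proposition is stated for complex tori, and in dimension $n>1$ a complex torus need not be projective, so you cannot simply invoke Poincar\'e complete reducibility or Albert's classification (both are theorems about abelian varieties, the latter proved via the Rosati involution of a polarization). Your opening claim---that the rank hypothesis ``forces enough structure on $V/\Gamma$ to carry a polarization''---is not a cheap observation you can quote and move past: establishing it is essentially equivalent to the whole proposition, since the natural route to a polarization is to first show $V/\Gamma\sim E^{\times n}$, after which projectivity is automatic because one-dimensional complex tori are elliptic curves. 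As written, the plan begs the question.

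The argument that actually works in the complex-torus setting is more elementary than yours and avoids both Poincar\'e and Albert. The analytic representation gives an injection $\End_\Q(V/\Gamma)\otimes\R \hookrightarrow \End_\C(V)\cong M_n(\C)$, and when the rank is $2n^2$ this is an isomorphism; hence $\End_\Q(V/\Gamma)$ is a simple $\Q$-algebra, say $M_m(D)$ with $D$ a division algebra. The matrix idempotents $e_{jj}$ already produce an isogeny decomposition $V/\Gamma\sim X_1^{\times m}$ with $\End_\Q(X_1)\cong D$, no reducibility theorem required. Then the rational representation makes $H_1(X_1,\Q)$ a (necessarily free) $D$-module, so $2\dim X_1 = r\cdot\dim_\Q D = r\cdot 2(\dim X_1)^2$ for some $r\ge 1$, forcing $\dim X_1 = 1$ and $\dim_\Q D=2$; then $D\otimes\R\cong\C$ pins $D$ down as an imaginary quadratic field, i.e.\ $E:=X_1$ has CM. Your closing Cauchy--Schwarz collapse $2n^2=\sum_i n_i^2\dim_\Q D_i\le 2\sum n_i^2 g_i^2\le 2(\sum n_i g_i)^2=2n^2$ is correct and is a clean way to force a single elliptic factor \emph{if} you already had the product decomposition and the Albert bound; the point is that in the torus setting the maximal-rank hypothesis hands you the matrix-algebra structure directly, which does the work that reducibility and Albert would otherwise do.
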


For complex Abelian varieties, we have two standard representations for the endomorphism ring, induced from the fact that any  endomorphism $f\in \End(V/\Gamma)$ is given by a $\C$-linear map from $V$ to itself, such that its restriction to the lattice $\Gamma$  is contained in the lattice. This prompts the following definition.

\begin{defn}
   Let $V/\Gamma$ be a polarized Abelian variety with the endomorphism ring  $\End(V/\Gamma)$. $\End(V/\Gamma)$ induces two injective ring homomorphisms: 
   \begin{enumerate}
   \item $\tau_{a}:\End(V/\Gamma)\rightarrow \End_{\C}(V)\cong \C(dim V)$ given by $\tau_{a}(f)=f_{a}$, and \item $\tau_{r}:\End(V/\Gamma)\rightarrow \End_{\mathbb{Z}}(\Gamma)\cong \mathbb{Z}(2\cdot dim V)$ given by $\tau_{r}(f)=f_{r}$. 
   \end{enumerate}
   $\tau_{a}$ is called the \textbf{analytic representation}, and  $\tau_{r}$ the \textbf{rational representation}.
\end{defn}

The following proposition states that for any PPAV, the property of having certain automorphisms provides us with information about its full decomposition into products of elliptic curves.  

\begin{prop}\label{Lange thm}
    Suppose that $f\in \Aut(V/\Gamma)$ is an automorphism of order $d\ge 3$ with $\tau_{a}(f)=\zeta_{d}\cdot id_{V}$, where $\zeta_{d}$ is a $d$-th root of unity. Then $d\in \{3,4,6\}$, and $V/\Gamma\cong E\oplus \cdots\oplus E=:E^{\oplus \dim V}$, where $E$ denotes the elliptic curve admitting automorphisms of order $d$.
 \end{prop}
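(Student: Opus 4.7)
The plan is to attack the two claims separately, beginning with the constraint on $d$. The rational and analytic representations of $V/\Gamma$ are related by $\tau_{r}(f) \otimes_{\mathbb{R}} \mathbb{C} \cong \tau_{a}(f) \oplus \overline{\tau_{a}(f)}$ acting on $V \oplus \overline{V}$; under the hypothesis $\tau_{a}(f) = \zeta_{d}\,\mathrm{id}_{V}$, this means the eigenvalues of $\tau_{r}(f)$ (viewed as a complex matrix) are exactly $\zeta_{d}$ and $\overline{\zeta_{d}}$, each with multiplicity $\dim_{\mathbb{C}} V$. Because $\tau_{a}$ is injective, the multiplicative order of $\zeta_{d}$ equals the order of $f$, so $\zeta_{d}$ is a primitive $d$-th root of unity. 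Since $\tau_{r}(f)$ has entries in $\mathbb{Z}$, its characteristic polynomial lies in $\mathbb{Z}[x]$, and the cyclotomic polynomial $\Phi_{d}$ must therefore divide it. But the only eigenvalues available to serve as roots of $\Phi_{d}$ are $\zeta_{d}$ and $\overline{\zeta_{d}}$, forcing $\varphi(d) \leq 2$; combined with $d \geq 3$ this yields $d \in \{3,4,6\}$.

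For the decomposition, the next step is to use $f$ to promote $\Gamma$ to a module over $\mathbb{Z}[\zeta_{d}]$ via the assignment $\zeta_{d} \mapsto \tau_{r}(f)$, which is well-defined since $\tau_{r}(f)$ satisfies $\Phi_{d}$ by the argument above. For each $d \in \{3,4,6\}$ the ring $\mathbb{Z}[\zeta_{d}]$ is a principal ideal domain (Gaussian integers for $d=4$, Eisenstein integers for $d=3,6$, using $\mathbb{Z}[\zeta_{6}] = \mathbb{Z}[\zeta_{3}]$), so the torsion-free, finitely generated $\mathbb{Z}[\zeta_{d}]$-module $\Gamma$ is free. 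Comparing $\mathbb{Z}$-ranks forces $\Gamma \cong \mathbb{Z}[\zeta_{d}]^{\dim V}$. Tensoring with $\mathbb{R}$ then identifies $V \cong \mathbb{C}^{\dim V}$ with $\Gamma$ sitting as the standard lattice $\mathbb{Z}[\zeta_{d}]^{\dim V}$, giving $V/\Gamma \cong (\mathbb{C}/\mathbb{Z}[\zeta_{d}])^{\dim V} = E^{\oplus \dim V}$, where $E$ is the CM elliptic curve admitting automorphisms of order $d$.

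The main subtlety is verifying that the complex structure coming from the $\mathbb{Z}[\zeta_{d}]$-module structure on $\Gamma$ coincides with the original complex structure on $V$; without this check, one only obtains an isomorphism of real tori. The resolution comes from the fact that $\tau_{a}(f)$ is itself $\mathbb{C}$-linear and equals scalar multiplication by $\zeta_{d}$: fixing the embedding $\mathbb{Z}[\zeta_{d}] \hookrightarrow \mathbb{C}$ that sends $\zeta_{d}$ to this same complex number makes the two $\mathbb{C}$-actions on $V = \Gamma \otimes_{\mathbb{Z}} \mathbb{R}$ agree on $\mathbb{Z}[\zeta_{d}]$, and hence on all of $\mathbb{C}$ by $\mathbb{R}$-linear extension. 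Thus the isomorphism above is genuinely one of complex tori, completing the proof.
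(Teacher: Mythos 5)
Your proof is correct. Note that the paper itself gives no argument for this proposition and simply cites Birkenhake--Lange (p.~420), so there is no in-paper proof to compare against; your argument reproduces the standard one used there. Both halves are sound: the bound $\varphi(d)\le 2$ follows from the integrality of $\tau_r(f)$ together with the fact that its only complex eigenvalues are $\zeta_d$ and $\overline{\zeta_d}$, and the decomposition follows from the structure theorem for finitely generated modules over the PID $\Z[\zeta_d]$. One small point you gloss over: to conclude that $\Phi_d(\tau_r(f))=0$ (rather than merely that $\Phi_d$ divides the characteristic polynomial) you need $\tau_r(f)$ to be diagonalizable, which holds here because $\tau_a(f)$ is a scalar matrix, so $\tau_r(f)\otimes_\R\C\cong\tau_a(f)\oplus\overline{\tau_a(f)}$ is diagonal; it is worth stating explicitly. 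You also correctly flag and resolve the genuine subtlety, namely that the complex structure on $\Gamma\otimes_\Z\R$ coming from the $\Z[\zeta_d]$-action must be matched with the original one, and you do this by choosing the embedding $\Z[\zeta_d]\hookrightarrow\C$ determined by $\tau_a(f)=\zeta_d\cdot\mathrm{id}_V$ itself. The remaining steps (torsion-freeness of $\Gamma$ over the domain $\Z[\zeta_d]$, the rank comparison giving $\Gamma\cong\Z[\zeta_d]^{\dim V}$) are handled correctly.
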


{\it{Proof:}} See (\cite[p.\ 420]{BL}).
\begin{remark}
    \rm{ One can conclude that if, for some automorphism $f$, the matrix representation that defines  the analytic representation $\tau_{a}(f)$ in $\C(\dim V)$ is of the form $i\cdot I_{\dim V}\in \C(\dim V)$, then $V/\Gamma$ fully decomposes as a product of elliptic curves of $j$-invariant $1728$. It is through these analytic representations that we link up the right type of complex Abelian varieties (which we later call spinor Abelian varieties) with an associated Clifford algebra.  }

\end{remark}

\subsection{Clifford algebras}
Let $V$ be a vector space of real dimension $n$. From now on, we use signatures $(p,q)$ for our quadratic forms that define  quadratic structures on $V$, where $p+q=n$. 
The $q$ in the signature refers to the number of negative definite generators. Note that  when we refer to a quadratic vector space \gls{(V,q)}, the $q$ symbolizes the quadratic form associated with the quadratic vector space. 
While this may be confusing, this is a standard notation used in the case of Clifford algebras.

\begin{defn}

Let $(V,q)$ be a quadratic vector space over $\R$, where the form $q$ is of signature $(p,q)$. Let $V^{\otimes}$ be the tensor algebra associated to $(V,q)$. We define the \textbf{ideal generated by $q$} as $I_{q}=\langle v\otimes v-q(v)1_V: v\in V \rangle$. The \textbf{Clifford algebra} associated to the  quadratic vector space $(V,q)$ is the quotient $ \gls{C_q(V)}=V^{\otimes}/I_{q}$. For any real quadratic space $(V,q)$, we denote by $\C_{q}(V)$ the natural complexification of the Clifford algebra; that is, $\C_{q}(V)=C_{q}(V)\otimes_{\R} \C$.
\end{defn}

We denote the $k$-th graded component of any element $u\in C_{q}(V)$ in the Clifford algebra by $\langle u\rangle_{k}=\sum_{I\subset [n]: |I|=k} u_{I}e_{I}$, where $I=(j_1,\ldots,j_{k})$ with $1\leq j_1< \cdots <j_k\leq n$ and $[n]=\{1,2, \ldots ,n\}$,  $u_{I}\in \C$, and $e_{I}$ is the Clifford product of basis elements of the form $e_{j_1} \cdots e_{j_k}$. Thus the Clifford algebra $C_{q}(V)$ is a $\mathbb{Z}_2$-graded super algebra; that is, $C_{q}(V)=C^{+}_{q}(V)\oplus C^{-}_{q}(V)$ where $C^{+}_{q}(V)$ is the even subalgebra consisting of elements of an even bi-degree and  $C^{-}_{q}(V)$ is the odd part associated to the $\mathbb{Z}_2$ grading.

We now  define several important subgroups of $C_{q}(V)$ that we use in this paper.

\begin{defn}
We denote by $C_{q}(V)^{*}$ the group of invertible elements of the Clifford algebra. The \textbf{Clifford group}, denoted $\Gamma_{q}(V)$, is the subgroup of $C_{q}(V)^{*}$ that preserves $V$ under the adjoint action; that is,  $\gls{Clifford group}=\{g\in C_{q}(V)^{*}: gvg^{-1} \in V\}$. If we restrict the Clifford group  to the even invertible elements, we have what is called  the special Clifford group $\Gamma^{+}_{q}(V)=\Gamma_{q}(V)\cap C^{+}_{q}(V)^{*}$. The subgroup of $\Gamma_{q}(V)$ generated by elements $v\in V$ with $q(v)=\pm 1$ is called the \textbf{Pin group}. That is, $\Pin(V,q)=\{v_{1} \cdots v_{k}\in \Gamma_{q}(V):q(v_{j})=\pm 1\}.$ The \textbf{Spin group} is the subgroup of the Pin group generated by elements of an even grade, defined as $\Spin(V,q)=\{v_{1} \cdots v_{2m}\in \Pin(V,q)\}.$ Lastly, choosing  an orthonormal basis $e_{1},\ldots,e_{n}$, for the vector space $V$, we denote the finite subgroup of the multiplicative generators of $C_{q}(V)$ by $\gls{multiplicative generators, real}=\{ \pm e_{I}:I\subset [n]\}$.
\end{defn}

For the complexification $\C_{q}(V)$ we have the following definition. 

\begin{defn}
For the complexification $\C_{q}(V)$, we denote  the \textbf{complexified Spin groups} by $\Gamma^{c}_{q}(V)$, $\Spin(V_{\C})$, and $\Pin(V_{\C})$, along with the multiplicative group of generators \gls{multiplicative generators}.
\end{defn}

The complexified $\Spin$ groups contain the original $\Spin$ groups of $C_{q}(V)$. Moreover, we can view the multiplicative group of generators $\hat{\Gamma}_{q}^{c}(V)$ as the group $\hat{\Gamma}_{q}(V)\times \langle i \rangle $. This is because if we view the generators  of the algebra as a real basis, we have  the generators $1, e_{I}$, along with $i, i e_{I}$ for the imaginary generators, where we generate  $-1$ and $-i$ by products between the generators. 






    

We now shift our attention to unitary spinor modules, which are used in the construction of our special complex Abelian varieties.  We begin by fixing a special type of  involution on the Clifford algebra  $\C_{q}(V)$.
\begin{defn}
We define $*$ to be any conjugate antilinear involution on the complex Clifford algebra $\C_{q}(V)$ 
which satisfies the following: 
\begin{itemize}
\item $(u\cdot v)^{*}=v^{*}\cdot u^{*}$, for any $u,v\in\C_{q}(V)$.
\item $(cu)^{*}=\bar{c}u^{*}$, for any $u\in\C_{q}(V)$ and $c\in\C$.

\end{itemize}

\end{defn}

\begin{remark}
On $\C_{q}(V)$ we have a conjugate linear anti-automorphism $u^{*}=\tilde{\bar{u}}$, where $(u\cdot v)^{*}=v^{*}\cdot u^{*}$, and $(cu)^{*}=\bar{c}u^{*}$.  Here $\bar{u}$ is the extension of conjugation on the complex vector space $V\otimes \C$, and $\tilde{u}$ is the reversion anti-automorphism in $C_{q}(V)$.
\end{remark}

If, additionally, $*$ defines inverses for the Clifford algebra $\C_{q}(V)$, then the finite multiplicative group of complex generators $\hat{\Gamma}^{c}_{q}(V)$ sits comfortably inside the infinite group $\Pin_{c}(V)=\{x\in\Gamma(V\otimes \C):x^{*}x=1\}$. (See \cite{Me} for more on these groups). We now define unitary spinor modules for our Clifford algebra $\C_{q}(V)$.

\begin{defn}
For the complex Clifford algebra $\C_{q}(V)$, a \textbf{unitary spinor module} with respect to the anti-linear involution $*$ is a Hermitian super vector space \gls{(Delta, H)} with an isomorphism of  algebras 
\[\rho:\C_{q}(V)\xrightarrow{\cong} \End(\Delta)\]
such that for any $g\in \C_{q}(V)$, we have $\rho(g^{*})=\rho(g)^{*}$. The involution $*$ on $\End(\Delta)$, coming from the anti-linear involution on the Clifford algebra $\C_{q}(V)$, is the adjoint operation determined by our unique Hermitian metric $H$. 
\end{defn}


\begin{prop}
Any spinor module $\Delta$ admits a Hermitian metric, unique up to positive scalars, for which it becomes a unitary spinor module.

\end{prop}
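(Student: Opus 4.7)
The plan is to establish existence via an averaging argument over the finite multiplicative group $\hat{\Gamma}_q^c(V)$ of generators, and uniqueness via Schur's lemma, exploiting the fact that $\rho$ is an algebra isomorphism onto $\End(\Delta)$ and hence $\Delta$ is an irreducible $\C_q(V)$-module.

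For existence, first I would fix an arbitrary positive-definite Hermitian metric $H_0$ on the finite-dimensional complex vector space $\Delta$ and symmetrize it, setting
\[ H(v, w) \; := \; \frac{1}{|\hat{\Gamma}_q^c(V)|} \sum_{g \in \hat{\Gamma}_q^c(V)} H_0\bigl(\rho(g) v,\, \rho(g) w\bigr). \]
Each summand is positive-definite Hermitian, so $H$ inherits these properties, and by construction $H(\rho(g)v, \rho(g)w) = H(v,w)$ for every $g \in \hat{\Gamma}_q^c(V)$; equivalently, $\rho(g)^*\rho(g) = \mathrm{id}_\Delta$, where the adjoint is taken with respect to $H$. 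Because $\hat{\Gamma}_q^c(V)$ sits inside $\Pin_c(V) = \{x : x^*x = 1\}$, this unitarity is precisely the condition $\rho(g^*) = \rho(g)^{-1} = \rho(g)^*$ on generators. Since $\{e_I\}$ forms a $\C$-basis of $\C_q(V)$, extending by conjugate-linearity of both $*$ and the adjoint yields $\rho(u^*) = \rho(u)^*$ for all $u \in \C_q(V)$, so that $(\Delta, H)$ becomes a unitary spinor module.

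For uniqueness, suppose $H_1$ and $H_2$ are two Hermitian metrics on $\Delta$ both making it a unitary spinor module. In finite dimension there is a unique $T \in \End(\Delta)$ with $H_2(v,w) = H_1(Tv,w)$, and positivity of $H_2$ makes $T$ self-adjoint and positive definite with respect to $H_1$. The compatibility condition forces $\rho(g)^{*_1} = \rho(g^*) = \rho(g)^{*_2}$ for every $g \in \C_q(V)$, where $*_j$ denotes adjoint with respect to $H_j$; a direct comparison using $H_2(v,w) = H_1(Tv,w)$ then shows that $T$ commutes with every $\rho(g)$. Because $\rho$ surjects onto $\End(\Delta)$, $T$ commutes with all of $\End(\Delta)$, so Schur's lemma gives $T = c \cdot \mathrm{id}_\Delta$ for some scalar. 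Positivity forces $c \in \R_{>0}$, whence $H_2 = c H_1$.

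The main obstacle is aligning the chosen involution $*$ with the finite generating group so that the identity $g^* g = 1$ genuinely holds for every $g \in \hat{\Gamma}_q^c(V)$ (potentially after renormalizing real generators by factors of $i$ according to signature), for without this identity the averaged $H$ would only yield unitarity of the $\rho(g)$ rather than the required compatibility $\rho(g^*) = \rho(g)^*$. Once this normalization is made explicit, both existence and uniqueness reduce to standard representation-theoretic machinery.
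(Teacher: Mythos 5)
The paper itself does not supply a proof; it simply refers the reader to Meinrenken's book. Your proposal fills that gap with the standard argument, and it is essentially the proof one expects to find behind that citation: existence by averaging a seed Hermitian form over the finite multiplicative group $\hat{\Gamma}^c_q(V)$, and uniqueness via Schur's lemma, using the fact that $\rho:\C_q(V)\to\End(\Delta)$ is onto (so $\Delta$ is $\C_q(V)$-irreducible and any positive $H_1$-self-adjoint intertwiner is a positive scalar). Both halves are correct as written, and your passage from unitarity on generators to $\rho(u^*)=\rho(u)^*$ for all $u$ is legitimate precisely because $*$ is conjugate-linear and $\{e_I\}$ (with $\{ie_I\}$) spans $\C_q(V)$ over $\R$, so the identity extends by (conjugate-)linearity once it holds on the basis.

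The one point you flag as a potential gap is indeed the crux and is worth stating more strongly: the averaging forces $\rho(g)^*\rho(g)=\mathrm{id}$ for $g\in\hat{\Gamma}^c_q(V)$, but this yields the required compatibility $\rho(g^*)=\rho(g)^*$ only if $g^*g=1$, i.e.\ only if $\hat{\Gamma}^c_q(V)\subset\Pin_c(V)$. For the ``reversion composed with conjugation'' involution described in the paper's remark and a generator $e_j$ with $q(e_j)=-1$, one gets $e_j^*e_j=q(e_j)=-1$, so this inclusion genuinely fails without renormalizing $e_j\mapsto i e_j$ (or, equivalently, choosing $*$ so that it ``defines inverses,'' as the paper's own hypothesis puts it). So the proposition as stated is really about $*$-involutions of that normalized type; under that standing hypothesis your averaging argument closes cleanly, and your uniqueness argument is unconditional. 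In short: same standard approach as the cited source, correct modulo the normalization of $*$ that you correctly identify and that the paper assumes implicitly.
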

\noindent \textit{Proof:} See \cite[p.\ 78]{Me}.

Note that in the case of unitary spinor modules $\Delta$, the restriction of the unitary algebra isomorphism to the Spin groups preserves the Hermitian inner product $H$ on $\Delta$. That is, $\rho_{g}^{*}H=H$ for any element $g$ belonging to one of the Spin groups. When $*$ is defined to give us inverses for any choice of basis for $\C_{q}(V)$, then any $v\in V\otimes \C$ gives us a self-adjoint operator $\rho(v)\in \End(\Delta)$.

\section{Spinor Abelian Varieties}\label{Spinor Abelian Varieties} 

In this section, we consider a unitary spinor module $\Delta$ for an even-dimensional complexified  Clifford algebra $\C_{q}(V)$ (where $V$ is a real vector space of dimension $2k$), and we assume that $\Delta$ is a Hermitian vector space with a  Hermitian metric form defined on it.
We introduce  the following description of the spinor torus.
\subsection{Clifford multiplication on spinor tori }

\begin{defn}
Consider an even-dimensional complexified  Clifford algebra $\C_{q}(V)$. We define the quotient of its unitary spinor module $\Delta$ by a full rank lattice $\gls{Gamma_Delta}\subset \Delta$ as the associated  \textbf{spinor torus}, which we denote by $S_{\Delta}=\Delta/\Gamma_{\Delta}$.

\end{defn}

\begin{remark}
	\rm{For odd-dimensional vector spaces, with $\dim_{\C} V=2k+1$, we can obtain  spaces of spinors  of the form  $\Delta^{+}\oplus \Delta^{-}$, using the representations $\rho:\C_{q}(V)\xrightarrow{\cong} \End(\Delta^{+})\oplus \End(\Delta^{-})$.  Note that in this case \gls{half spinor} are of the same dimension, $2^{k}$, and are known as half spinor spaces. Since these half spinor spaces are just spinor spaces for a Clifford  algebra $\C_{q}(V)$ for some vector space of complex dimension $2k$, we mainly deal with even-dimensional cases for $V$.}
\end{remark}
We can view the  spinor torus $S_{\Delta}$ as a complex torus whose covering space  $\gls{covering space}=\Delta$ is a unitary spinor module associated to a Clifford algebra $\C_{q}(V)$ of some quadratic vector space. Hence $T_{0}S_{\Delta}$ satisfies the property that its space of endomorphisms is isomorphic, as a complex algebra, to the associated Clifford algebra; that is,  $\C_{q}(V)\cong \End(T_{0}S_{\Delta})$.
 We need all of the above to   define Clifford multiplication on a spinor torus properly, as a reduction of the isomorphism between the Clifford algebra and the space of endomorphisms.
\begin{defn}
We define $\C_{q}(V)_{\mathbb{Z}}$ as the full rank lattice associated with the complexified Clifford algebra $\C_{q}(V)$, when we view $\C_{q}(V)$ as a dimension $2^{2k}$ complex vector space.

\end{defn}
Any element $h\in \C_{q}(V)_{\mathbb{Z}}$ may be referred to as a lattice element of $\C_{q}(V)$. It should also be noted that $\C_{q}(V)_{\mathbb{Z}}$, as a full rank lattice, is an Abelian subgroup under addition, and multiplication in the algebra is closed and distributes across addition; that is, $\C_{q}(V)_{\mathbb{Z}}$ is  itself a subring of the Clifford algebra $\C_{q}(V)$. We can view the integer subring 
\gls{integer subring}
in a few equivalent but different ways. If we view $C_{q}(V)\subset \C_{q}(V)$ as the real form of the complex Clifford algebra and restrict its scalars to $\mathbb{Z}$-linear combinations, which we denote by $C_{q}(V)_{\mathbb{Z}}$,  we have a full rank lattice in $C_{q}(V)$. We can then $\mathbb{Z}$-tensor this lattice with the Gaussian ring $\mathbb{Z}[i]$ to obtain its extension as a lattice (or integer subring) onto $\C_{q}(V)$; that is, $\C_{q}(V)_{\mathbb{Z}}=C_{q}(V)_{\mathbb{Z}}\otimes_{\mathbb{Z}} \mathbb{Z}[i]$. Another approach is to choose  a complex  basis for $\C_{q}(V)$, say $e_{I}\otimes 1$ for a given basis $e_{I}$, in the real form $C_{q}(V)$, allowing us to view $\C_{q}(V)$ as a  dimension $2^{2k}$ complex vector space. Using this basis, we chose a real and imaginary basis, viewing our vector space as a real vector space of real  dimension $2^{2k+1}$. That is, we give it the real basis $e_{I}\otimes 1,e_{I}\otimes i$, where $e_{I}:I\subset [n]$  is the basis of the real form $C_{q}(V)$ and $I\subset [n]$ is an increasing sequence. Both approaches do require us to define a basis. The third approach just takes into account that the real quadratic vector space $(V,q)$ is a $\mathbb{Z}$-module under the operation of addition, $(V,+)$, if we restrict our scalar field from $\R$ to $\mathbb{Z}$; we denote this $\mathbb{Z}$-module, or its full rank lattice, by $V_{\mathbb{Z}}$. We then denote its tensor algebra (as $\mathbb{Z}$-tensors) by $V_{\mathbb{Z}}^{\otimes_{\mathbb{Z}}}$. Taking its quotient by the two-sided ideal obtained by restricting the quadratic form on $V$ to $V_{\mathbb{Z}}$, which we denote  by $I_{q}^{\mathbb{Z}}$, we obtain the ring  $C_{q}(V)_{\mathbb{Z}}=(V_{\mathbb{Z}})^{\otimes_{\mathbb{Z}}}/I^{\mathbb{Z}}_{q}$ over $\mathbb{Z}$. We then extend this natural construction by taking a $\mathbb{Z}$-tensor with the Gaussians to define the lattice $\C_{q}(V)_{\mathbb{Z}}$. We now define Clifford multiplication on our spinor torus as the restriction of the algebra isomorphism to this full rank lattice.

\begin{defn}
\textbf{Clifford multiplication on the spinor torus $S_{\Delta}$} is given as a descension of the unitary representation isomorphism  $\rho: \C_{q}(V)\xrightarrow{\cong} \End(\Delta)$ to a $\mathbb{Z}$-module homomorphism \gls{Clifford multiplication}. Clifford multiplication on our spinor torus $S_{\Delta}$ is then defined as the endomorphisms on $S_{\Delta}$ associated to the full rank lattice $\C_{q}(V)_{\mathbb{Z}}$ of the Clifford algebra $\C_{q}(V)$.

\end{defn}
Note that the full rank lattice $\Gamma_{\Delta}$ is chosen so that when we restrict the isomorphism $\rho$ to  $\hat{\rho}$, then  $\Gamma_{\Delta}$ is preserved for all lattice points $h\in\C_{q}(V)_{\mathbb{Z}}$.
This choice of lattice does depend, therefore, on both $\rho$ and $\Gamma_{\Delta}$ in a way that allows our isomorphism to descend.  We may use the term \textit{lattice actions} to refer to Clifford multiplication on $S_{\Delta}$. Also, the lattice actions on $S_{\Delta}$ restricted to the multiplicative group of generators $\hat{\Gamma}^{c}_{q}(V)\subset \C_{q}(V)_{\mathbb{Z}}$ give  us a finite group action on the spinor torus $S_{\Delta}$. The above definition of Clifford multiplication on our spinor tori $S_{\Delta}$ requires a closer look. When we define a basis, we can consider our full rank lattice as a direct sum $\C_{q}(V)_{\mathbb{Z}}=C_{q}(V)_{\mathbb{Z}}\oplus i\cdot C_{q}(V)_{\mathbb{Z}}$ where $C_{q}(V)_{\mathbb{Z}}$ is the integer subring of the real Clifford algebra, $C_{q}(V)$, pre-complexification. We provide the diagram in Figure \ref{football} for clarification of what we mean by the integer subring $\C_{q}(V)_{\mathbb{Z}}\subset \C_{q}(V)$.

	
	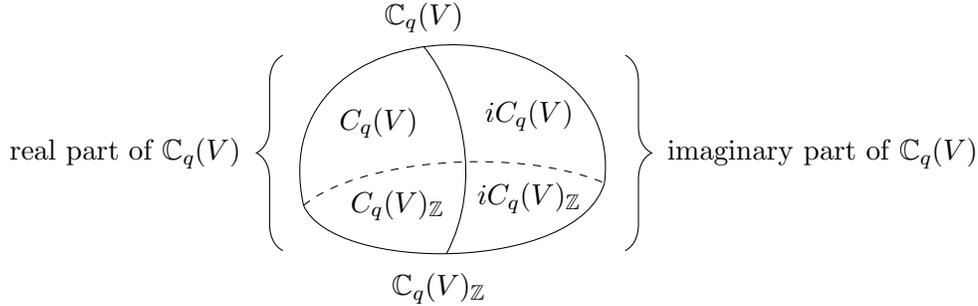
\begin{figure}[h!]
		\centering
		
		\begin{tikzpicture}[scale=1]
			\draw (-2,-.8) .. controls (-2.6,2) and (2.3,2) .. (2,-0.5);
			
			\draw (-2,-0.8) .. controls (-1.6,-1.7) and (1.65,-1.7) .. (2,-0.5);
			
			\draw [dashed] (-2,-0.8) .. controls (-1.3,-.2) and (0.8,0) .. (2,-0.5);
			
			\draw (-.4,1.31) .. controls (0.1,.7) and (0.4,-.5) .. (-.1,-1.44);
			
			\draw [decorate,
			decoration = {brace, raise=5pt,amplitude=10pt}] (-2.1,-1.4) -- (-2.1,1.2);
			
			\node [left] at (-2.7,-.1) {real part of $\mathbb{C}_q(V)$};
			
			\draw [decorate,
			decoration = {brace, raise=5pt,amplitude=10pt}] (2.1,1.2) -- (2.1,-1.4);
			
			\node [right] at (2.7,-.1) {imaginary part of $\mathbb{C}_q(V)$};
			
			\node at (-.4,1.7) {$\mathbb{C}_q(V)$};
			
			\node at (-.2,-1.9) {$\mathbb{C}_q(V)_\mathbb{Z}$};
			
			\node at (-1,0.3) {$C_q(V)$};
			
			\node at (1,0.4) {$i C_q(V)$};
			
			\node at (-.75,-.8) {$C_q(V)_\mathbb{Z}$};
			
			\node at (1,-.7) {$i C_q(V)_\mathbb{Z}$};
			
		\end{tikzpicture}
		
		\caption{$\C_{q}(V)_{\mathbb{Z}}$ as a lattice on $\C_{q}(V)$ and its structure in relation to $C_{q}(V)$}
	\label{football}	
	\end{figure}

Notice that Clifford multiplication as defined preserves the full rank lattice $\Gamma_{\Delta}\subset \Delta$, and the restriction to the integer Spin groups preserves the Hermitian metric on $\Delta$ and the full rank lattice in our spinor torus $S_{\Delta}$. Note that some of our restrictions needed  to define Clifford multiplication and the structure of our spinor torus may be dropped. 
For instance, we  may look for complex tori satisfying  the property of having only $C_{q}(V)_{\mathbb{Z}}$ actions but not $\C_{q}(V)_{\mathbb{Z}}$ actions. This is equivalent to stating that the imaginary part of $\C_{q}(V)_{\mathbb{Z}}$ does not preserve the lattice  $\Gamma_{\Delta}$. But since this lattice would be preserved only  by $C_{q}(V)_{\mathbb{Z}}$ actions, it is only multiplication by $i$ that is the problem (as it would not preserve the lattice). More specifically, restricting Clifford multiplication so that the actions are  only from the lattice of the real form, $C_{q}(V)_{\mathbb{Z}}\subset C_{q}(V)$, is equivalent to saying $i\cdot \Gamma_{\Delta}\not \subset \Gamma_{\Delta}$, but $C_{q}(V)_{\mathbb{Z}}\cdot \Gamma_{\Delta}\subset \Gamma_{\Delta} $. A spinor torus that satisfies this criteria has a covering space that is a space of complex spinors for the real form $C_{q}(V)$ but not the complexification, and hence is of a different type than the ones we have discussed above. This additional restriction requires its own definition. 
	

\begin{defn}
A spinor torus  that admits only  $C_{q}(V)_{\mathbb{Z}}$ multiplication (but does not admit $\C_{q}(V)_{\mathbb{Z}}$ multiplication)  is called a \textbf{strictly real spinor torus}, and we denote it by $S_{\Delta^{\R}}$ to distinguish it from $S_{\Delta}$. On a strictly real spinor torus, Clifford multiplication comes from the restriction $\rho^{\R}:C_{q}(V)_{\mathbb{Z}}\rightarrow \End (S_{\Delta^{\R}})$, where $\Delta^{\R}$ is still a complex vector space that defines a representation for the real  Clifford algebra $C_{q}(V)$ and not its complexification $\C_{q}(V)$.

\end{defn}
\begin{remark}
\rm{Although we do not provide examples of strictly real spinor tori, we do want to bring attention to their structure. The way we would construct strictly real spinor tori is to begin with a real spinor module $\Delta^{\R}$  and define a complex structure $J:\Delta^{\R}\rightarrow \Delta^{\R}$, making $\Delta^{\R}$ a complex vector space with a full rank lattice preserved by Clifford multiplication.}

\end{remark}
From the preceding discussion, for any spinor torus $S_{\Delta}$ we have $i\cdot \Gamma_{\Delta}\subset \Gamma_{\Delta}$; hence any of the generators $e_{I}\in \hat{\Gamma}_{q}(V)$ of order $2$ or $4$ acting on $S_{\Delta}$ also has a complex action, given by $i\cdot e_{I}$, which is of order $4$ or $2$ respectively.
\subsection{Spinor Abelian varieties and some elementary properties} 
In this subsection we work with spinor tori $S_{\Delta}$  that have the additional structure of being principally polarized. We introduce the following definition.


\begin{defn}
Let $S_{\Delta}$ be a spinor torus with Clifford multiplication such that the positive definite Hermitian form $H$ on $\Delta$ defines a principal  polarization for $S_{\Delta}$. Then $S_{\Delta}$ is called a \textbf{spinor Abelian variety}.
\end{defn}

For any spinor Abelian variety constructed from a unitary spinor module $\Delta$ associated with a complex Clifford algebra $\C_{q}(V)$ of complex dimension $2^{2k}$ with a positive definite Hermitian form $H$, we  call $H$  its polarization, hence making $S_{\Delta}$ a complex  Abelian variety of dimension $2^{k}$. When we have the additional structure of $H$ defining a principal polarization, then the spinor Abelian variety is categorized as a PPAV of dimension $2^{k}$ (as is often the case).

\begin{remark}
\rm{We may have Clifford modules that generate tori with Clifford multiplication  that fail to be spinor Abelian varieties (they also fail to be spinor tori). For example, $\C_{q}(V)$ is itself a complex Clifford module, via left multiplication, and is itself a unitary Clifford module. However, $\C_{q}(V)$ (although a Clifford module) is not a spinor space since $\End(\C_{q}(V))$ is not the same space as $\C_{q}(V)$. Hence the space of endomorphisms is bigger in this case, and therefore there is no isomorphism. Moreover, one may construct  a Clifford module with no notion of a polarization in mind.  Thus we conclude that at the bare minimum, the ring of endomorphisms of the covering space of the spinor Abelian variety must be of complex dimension $2^{2k}$. Since  $\End(\C_{q}(V))$ is of higher dimension, $\C_{q}(V)$ modulo  a full rank lattice will not give us a spinor Abelian variety.}

\end{remark}

We now carefully analyze Clifford multiplication on the spinor torus $S_{\Delta}$. We start with the following lemma that provides a description of  the  lattice actions $\C_{q}(V)_{\mathbb{Z}}$ on $S_{\Delta}$ in terms of the translation holomorphisms  $t_{x}:S_{\Delta}\rightarrow S_{\Delta}$, given by $t_{x}y=x+y$ for all $x,y\in S_{\Delta}$.

\begin{lem}\label{translation constants}
For any lattice element $h\in \C_{q}(V)_{\mathbb{Z}}$ and $\bar{\lambda}\in S_{\Delta}$, there exists an element $\bar{\mu}\in S_{\Delta}$ such that Clifford multiplication by $h$  on $S_{\Delta}$ is represented by  translation by $\bar{\mu}$; that is, $\rho_{h}(\bar{\lambda})=t_{\bar{\mu}}(\bar{\lambda})$.

\end{lem}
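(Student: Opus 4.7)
The plan is to exploit the fact that $\rho_{h}$, viewed as an endomorphism of the torus $S_{\Delta}$, lifts to a $\C$-linear map on the universal cover $\Delta$, so the sought-after translation vector can be written down explicitly as $\bar{\mu}=\rho_{h}(\bar{\lambda})-\bar{\lambda}$ and then one only has to check that this prescription is independent of the chosen lift.

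First I would pick a lift $\lambda\in\Delta$ of $\bar{\lambda}$ under the quotient map $\pi\colon\Delta\to S_{\Delta}=\Delta/\Gamma_{\Delta}$. Because $\rho$ is an algebra homomorphism into $\End(\Delta)$, the action of $h$ on $\Delta$ is a $\C$-linear map; in particular one may set
\[
\mu \;:=\; \rho(h)\lambda-\lambda\;\in\;\Delta,
\qquad
\bar{\mu}\;:=\;\pi(\mu)\;\in\;S_{\Delta}.
\]
Then, projecting, $\rho_{h}(\bar{\lambda})=\pi(\rho(h)\lambda)=\pi(\lambda+\mu)=\bar{\lambda}+\bar{\mu}=t_{\bar{\mu}}(\bar{\lambda})$, which is the identity required by the lemma.

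The one point requiring justification is that $\bar{\mu}$ does not depend on which lift $\lambda$ was chosen, i.e.\ that the construction descends to $S_{\Delta}$. Here I would use the defining property of $\hat{\rho}$ recalled just before the lemma, namely that $\rho(h)$ preserves $\Gamma_{\Delta}$ for every $h\in\C_{q}(V)_{\mathbb{Z}}$. For a different lift $\lambda'=\lambda+\gamma$ with $\gamma\in\Gamma_{\Delta}$, one has
\[
\mu' \;=\; \rho(h)\lambda'-\lambda' \;=\; \mu+\bigl(\rho(h)\gamma-\gamma\bigr),
\]
and both $\rho(h)\gamma$ and $\gamma$ lie in $\Gamma_{\Delta}$, so $\mu'-\mu\in\Gamma_{\Delta}$ and $\pi(\mu')=\pi(\mu)=\bar{\mu}$.

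I do not expect a serious obstacle: the statement is essentially the tautology that any endomorphism of an Abelian variety differs from translation by an amount depending on the input point, made precise in the specific form needed later. The only substantive content is the lattice-preservation property of Clifford multiplication, which is already built into the definition of $\hat{\rho}$; this is what guarantees $\bar{\mu}$ is a bona fide element of $S_{\Delta}$ and not merely of $\Delta$.
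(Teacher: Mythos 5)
Your proposal is correct and is essentially the paper's argument: both define $\bar{\mu}=\rho_{h}(\bar{\lambda})-\bar{\lambda}$ and observe that translation by $\bar{\mu}$ reproduces the Clifford action. The paper simply performs this subtraction directly in the group $S_{\Delta}$, so the lift to $\Delta$ and the check of independence of the lift (while correct and harmless) are not needed.
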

\begin{proof}
Fix an element $\bar{\lambda}\in S_{\Delta}$ and a lattice point $h\in\C_{q}(V)_{\mathbb{Z}}$. 
We can now define $\bar{\mu}_{\bar{\lambda},h}=\rho_{h}(\bar{\lambda})-\bar{\lambda}\in S_{\Delta}$. Clearly $\bar{\mu}_{\bar{\lambda},h}$ is an element in $S_{\Delta}$, as  $S_{\Delta}$ is by definition a complex Abelian  Lie group with  group operation given by addition. Hence, we can compute $t_{\bar{\mu}_{\bar{\lambda},h}}(\bar{\lambda})=\bar{\lambda}+(\rho_{h}(\bar{\lambda})-\bar{\lambda})=(\bar{\lambda}-\bar{\lambda})+\rho_{h}(\bar{\lambda})=\rho_{h}(\bar{\lambda})$.
\end{proof}

As a consequence of Lemma \ref{translation constants},  we can formulate the following definition.

\begin{defn}
Consider any $\bar{\lambda}\in S_{\Delta}$ and lattice point $h\in\C_{q}(V)_{\mathbb{Z}}$. We define $M_{\bar{\lambda},h}\in S_{\Delta}$ as the \textbf{translation element associated with the action} $\rho_{h}(\bar{\lambda})$ if $t_{M_{\bar{\lambda},h}}(\bar{\lambda})=\rho_{h}(\bar{\lambda})$.

\end{defn}
The above means that we can consider Clifford multiplication endomorphisms on our spinor torus in terms of translations. The following proposition provides some  insight into the translation elements given by generators of the Clifford algebra acting on the principally polarized spinor torus $S_{\Delta}$.

\begin{prop}\label{translation thm}
Consider a spinor Abelian variety $S_{\Delta}$.  Then for any $\bar{\lambda}\in S_{\Delta}$ and generator $e_{I}\in\Gamma_{q}(V)$ of order $4$, we have a  system of translation elements $M,N\in S_{\Delta}$ satisfying  $ \bar{\lambda}^{-1}=\dfrac{1}{2} (M+N)$ such that  
\[   \left\{
\begin{array}{ll}
   \rho_{e_{I}}(\bar{\lambda})=t_{M}(\bar{\lambda})     \\
      \rho^{2}_{e_{I}}(\bar{\lambda})=t_{M+N}(\bar{\lambda})\\
       \rho^{3}_{e_{I}}(\bar{\lambda})=t_{N}(\bar{\lambda})\\
\rho^{4}_{e_{I}}(\bar{\lambda})=t_{0}(\bar{\lambda}).\\
\end{array} 
\right. \]
\end{prop}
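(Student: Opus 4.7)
The main idea is to exploit the fact that any generator $e_I \in \hat{\Gamma}_q(V)$ of multiplicative order $4$ must square to $-1$ in the Clifford algebra (if $e_I^2 = +1$ then $e_I$ has order $2$), so that the induced endomorphism $\rho_{e_I}$ on the spinor torus $S_\Delta$ satisfies $\rho_{e_I}^2 = -\mathrm{id}_{S_\Delta}$. This collapses all four identities into simple additive statements about $\bar\lambda$ and its image under $\rho_{e_I}$.

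The plan is as follows. First, I would invoke Lemma \ref{translation constants} once for each of the iterates $\rho_{e_I}^j(\bar\lambda)$, $j = 1, 2, 3, 4$, to obtain well-defined translation elements in $S_\Delta$; call them $M_1, M_2, M_3, M_4$. Setting $M := M_1 = \rho_{e_I}(\bar\lambda) - \bar\lambda$ and $N := M_3 = \rho_{e_I}^3(\bar\lambda) - \bar\lambda$ gives the first and third equations directly from the definition of translation. Since $e_I^4 = 1$ in $\C_q(V)_{\mathbb Z}$ and $\hat\rho$ is a ring homomorphism, $\rho_{e_I}^4 = \mathrm{id}$, which yields $M_4 = 0$ and therefore the fourth equation $\rho_{e_I}^4(\bar\lambda) = t_0(\bar\lambda)$.

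Next I would establish the middle identity by computing $M + N$ additively. Because $\rho_{e_I}^2 = -\mathrm{id}$ on $\Delta$ (and hence descends to $-\mathrm{id}$ on $S_\Delta$), one has $\rho_{e_I}^3 = -\rho_{e_I}$ and $\rho_{e_I}^2(\bar\lambda) = -\bar\lambda$. Thus
\[
M + N = \bigl(\rho_{e_I}(\bar\lambda)-\bar\lambda\bigr) + \bigl(-\rho_{e_I}(\bar\lambda)-\bar\lambda\bigr) = -2\bar\lambda,
\]
so $t_{M+N}(\bar\lambda) = \bar\lambda + (M+N) = \bar\lambda - 2\bar\lambda = -\bar\lambda = \rho_{e_I}^2(\bar\lambda)$, proving the second equation. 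The same calculation gives $\tfrac12(M+N) = -\bar\lambda = \bar\lambda^{-1}$ in the additive group $S_\Delta$, which is the claimed relation between $M$, $N$ and $\bar\lambda^{-1}$.

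There is no real obstacle here beyond identifying the right algebraic fact ($e_I^2 = -1$ for order-$4$ generators) and keeping track of the shift from the multiplicative notation $\bar\lambda^{-1}$ to the additive inverse $-\bar\lambda$ on the Abelian variety. The hardest bookkeeping step is simply verifying that the translation elements supplied by Lemma \ref{translation constants} for $\rho_{e_I}^2$ coincide with $M+N$ rather than being independent data; this is forced by the linearity of $\rho_{e_I}$ on $\Delta$ prior to descending to $S_\Delta$, so everything reduces to the single relation $M+N = -2\bar\lambda$.
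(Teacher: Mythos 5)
Your proof is correct and follows essentially the same route as the paper's: both invoke Lemma \ref{translation constants} to supply translation elements and both rest on the key fact that an order-$4$ generator satisfies $e_I^2=-1$, hence $\rho_{e_I}^2=-\mathrm{id}$ on $S_\Delta$. Your version is a bit tidier because you define $N$ directly as $\rho_{e_I}^3(\bar\lambda)-\bar\lambda$ and compute $M+N=-2\bar\lambda$ in one stroke, which lets you skip the paper's auxiliary element $O$ and the intermediate step showing $O=-M$.
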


\begin{proof}
Fix any generator $e_{I}\in\Gamma_{q}(V)$  of  order $4$ and $\bar{\lambda}\in S_{\Delta}$.  Then by Lemma \ref{translation constants}, we have  $\rho_{e_{I}}(\bar{\lambda})=\bar{\lambda}+M$ for some translation element $M\in S_{\Delta}$ associated with the lattice action by $e_{I}$ and the element $\bar{\lambda}$.  By repeating this process, we get $\rho^2_{e_{I}}(\bar{\lambda})=-\bar{\lambda}$, as well as the equation $\rho^2_{e_{I}}(\bar{\lambda})=(\bar{\lambda}+M)+N$ for some translation element $N\in S_{\Delta}$ associated with the lattice action by $e_{I}$ and the element $\bar{\lambda}+M$. Using the above two translation equations, we can write $\bar{\lambda}+M+N=-\bar{\lambda}$. Now,  solving for $-\bar{\lambda}=\bar{\lambda}^{-1}$, we get the equation   $ \bar{\lambda}^{-1}=\dfrac{1}{2} (M+N)$. By composing the action with itself for a third time, we get $\rho_{e_{I}}^3(\bar{\lambda})=-\rho_{e_{I}}(\bar{\lambda})=-(M+\bar{\lambda})$. Moreover,  we also have $\rho^3_{e_{I}}(\bar{\lambda})=(\bar{\lambda}+M+N)+O$ for some translation element $O\in S_{\Delta}$ associated with the lattice action by $e_{I}$ and the element $\bar{\lambda}+M+N$. Setting both equations for $\rho^{3}_{e_{I}}(\bar{\lambda})$ together and solving for $\bar{\lambda}^{-1}$  yields the equation $\bar{\lambda}^{-1}=M+\dfrac{1}{2}(N+O)$. When we substitute this expression for  $\bar{\lambda}^{-1}$ with   $ \bar{\lambda}^{-1}=\dfrac{1}{2} (M+N)$, we get the equality  $O=-M$. Hence we obtain  $\rho^{3}_{e_{I}}(\bar{\lambda})=\bar{\lambda}+M+N+O=\bar{\lambda}+M+N-M=\bar{\lambda}+N=t_{N}(\bar{\lambda})$. Note that we also  have $e_{I}^4=1$. Therefore $\rho^4_{e_{I}}(\bar{\lambda})=id(\bar{\lambda})=t_0(\bar{\lambda})$.

\end{proof}
It follows from Proposition \ref{translation thm} that for any generator, we need two associated translation constants $M$ and $N$ (associated with $e_{I}$ and $\bar{\lambda}$) to generate all orders of Clifford multiplication of $\bar{\lambda}\in S_{\Delta}$ by a given lattice point $e_{I}\in \Gamma_{q}(V)$ in terms of the associated translation. This  prompts the following definition. 

\begin{defn}
For any generator $e_{I}\in\Gamma_{q}(V)$ and element $\bar{\lambda}\in S_{\Delta}$, we define the translation elements  $M,N$ that define all orders of  Clifford multiplication on $\bar{\lambda}$ by a lattice point $e_{I}\in \Gamma_{q}(V)$ as  \textbf{the  Clifford translation elements  $M,N$ associated to multiplication by the lattice point $e_{I}$.}

\end{defn}
In the case of the 2-torsion points, which we denote  $J_{2}^{S_{\Delta}}$,  we have the following corollary.

\begin{cor}\label{2 torsion corrollary}
Consider a 2-torsion point $\epsilon\in J_{2}^{S_{\Delta}}\subset S_{\Delta}$. The  actions by generators $e_{I}\in\Gamma_{q}(V)$ of any order greater than one yields one translation element $M$ which is itself a 2-torsion point.
\end{cor}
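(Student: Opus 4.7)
The plan is to combine Lemma \ref{translation constants} with the structural fact that Clifford multiplication by any lattice element of $\C_{q}(V)_{\mathbb{Z}}$ is a group endomorphism of the Abelian variety $S_{\Delta}$. Since $\hat{\rho}_{e_{I}}$ descends from a $\C$-linear, lattice-preserving map on $\Delta$, it is in particular additive on $S_{\Delta}$ and therefore sends the 2-torsion subgroup $J_{2}^{S_{\Delta}}$ into itself.

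The main step is then essentially a two-line argument. By Lemma \ref{translation constants}, the translation element associated with the action of $e_{I}$ on $\epsilon$ is $M = \hat{\rho}_{e_{I}}(\epsilon) - \epsilon$. The observation above guarantees $\hat{\rho}_{e_{I}}(\epsilon)\in J_{2}^{S_{\Delta}}$, and since $\epsilon$ is also 2-torsion by assumption, the difference $M$ lies in $J_{2}^{S_{\Delta}}$ as well. This already handles generators of order $2$, for which a single translation element describes the whole cyclic orbit.

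To justify the phrasing ``one translation element'' in the order-$4$ case, I would then specialize Proposition \ref{translation thm} to $\bar{\lambda}=\epsilon$. The identity $\bar{\lambda}^{-1} = \tfrac{1}{2}(M+N)$ reduces to $\epsilon = \tfrac{1}{2}(M+N)$, i.e.\ $M+N = 2\epsilon = 0$, forcing $N = -M$; the second translation element becomes redundant and all four orders of the action on $\epsilon$ are determined by the single 2-torsion element $M$. There is no substantive obstacle here: the corollary really packages the fact that group endomorphisms preserve torsion together with the translation-constant bookkeeping already established in Lemma \ref{translation constants} and Proposition \ref{translation thm}. The only point worth emphasizing explicitly is that the lattice $\C_{q}(V)_{\mathbb{Z}}$ was built precisely so that Clifford multiplication descends to an endomorphism of $S_{\Delta}$, which is what allows $M$ to inherit the torsion property from $\epsilon$.
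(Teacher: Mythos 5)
Your proposal is correct and takes essentially the same approach as the paper: both derive $M+N=0$ from Proposition \ref{translation thm}, and both use linearity (additivity) of the endomorphism $\hat{\rho}_{e_I}$ to conclude $2M=0$, with your phrasing via ``endomorphisms preserve the torsion subgroup'' being just a cleaner packaging of the paper's explicit computation $2\rho_{e_I}(\epsilon)=\rho_{e_I}(2\epsilon)=0$.
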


\begin{proof}
Fix any generator $e_{I}\in\Gamma_{q}(V)$  of any  order greater than one. Also fix a 2-torsion point $\epsilon\in J_{2}^{S_{\Delta}}\subset S_{\Delta}$ as in Proposition \ref{translation thm} satisfying the equation $ {\epsilon}^{-1}=\dfrac{1}{2} (M+N)$ for the translation elements $M$ and $N$ associated with the action of $e_{I}$. For 2-torsion points we have  $\epsilon^{-1}=\epsilon$. Hence we get the equation $2\epsilon=0=M+N$. Then it follows that the  second translation element associated with the action of $e_{I}$ on $\epsilon$ is $M^{-1}$. To prove that $M$ is itself a 2-torsion point, we use the linearity property associated with the endomorphism $\rho_{e_{I}}:S_{\Delta}\rightarrow S_{\Delta}$, where  by Lemma \ref{translation constants} we have $2\cdot \rho_{e_{I}}(\epsilon)=2(\epsilon+M)$. Using the  bilinearity property, we also have  the equation $2\cdot \rho_{e_{I}}(\epsilon)=\rho_{e_{I}}(2\cdot \epsilon)=\rho_{e_{I}}(0)=0$. Hence we obtain the equation $2\cdot (\epsilon + M)=0$, which immediately  implies $2\epsilon +2 M=0$. Therefore  $2M=0$, forcing $M$ to be a 2-torsion point on $S_{\Delta}$. Moreover, it immediately follows that $N=M^{-1}=M$.

\end{proof}
At this time, we can conclude that $\hat{\Gamma}^{c}_{q}(V)$ actions on $J^{S_{\Delta}}_2$ can be described in terms of the induced translation morphisms. We quickly remark that by the nature of the 2-torsion points, any action by a lattice point in $\C_{q}(V)_{\mathbb{Z}}$ reduces to an action by a generator in $\hat{\Gamma}^{c}_{q}(V).$

We now extend these properties into the dual Abelian variety of $S_{\Delta}$, defined as  $\gls{degree 0 line bundles}=\{L\in \gls{line bundles}:c_1(L)=0\}$ (see [8] for more on the dual lattice). We start with the following proposition.

\begin{prop}\label{dual spinor AV}
For any spinor Abelian variety $S_{\Delta}$, the group $\Pic^{0}(S_{\Delta}) $ of line bundles with a vanishing first Chern class is also  a spinor Abelian variety.

\end{prop}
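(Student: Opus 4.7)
The plan is to exploit the principal polarization on $S_{\Delta}$ to produce a canonical isomorphism between $S_{\Delta}$ and its dual, and then transport the spinor structure along this isomorphism.

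First, I would invoke the standard fact from the theory of polarized Abelian varieties (see \cite{BL}) that for any polarized Abelian variety $(A,H)$ with a line bundle $L$ of first Chern class $H$, the morphism
\[ \phi_L \colon A \longrightarrow \Pic^{0}(A), \qquad x \longmapsto t_{x}^{*}L \otimes L^{-1} \]
is an isogeny whose kernel has order $\det(E)^{2}$, so that it is an \emph{isomorphism} of complex Abelian varieties precisely when the polarization is principal. Applied to $S_{\Delta}$ with its principal polarization coming from $H$, this yields a canonical isomorphism $\phi_{L_{\Delta}} \colon S_{\Delta} \xrightarrow{\cong} \Pic^{0}(S_{\Delta})$.

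Second, I would differentiate $\phi_{L_{\Delta}}$ at the origin. Because $\phi_{L_{\Delta}}$ is a biholomorphism of complex Abelian varieties, its derivative at $0$ is a $\C$-linear isomorphism $d\phi_{L_{\Delta}} \colon \Delta \xrightarrow{\cong} T_{0}\Pic^{0}(S_{\Delta})$ which carries the lattice $\Gamma_{\Delta} \subset \Delta$ bijectively onto the lattice defining $\Pic^{0}(S_{\Delta})$. Conjugating the unitary representation $\rho \colon \C_{q}(V) \xrightarrow{\cong} \End(\Delta)$ by $d\phi_{L_{\Delta}}$ produces an isomorphism of complex algebras $\rho' \colon \C_{q}(V) \xrightarrow{\cong} \End(T_{0}\Pic^{0}(S_{\Delta}))$, so $T_{0}\Pic^{0}(S_{\Delta})$ carries a $\C_{q}(V)$-module structure whose endomorphism algebra is the full Clifford algebra. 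Pushing the Hermitian form $H$ on $\Delta$ forward through $d\phi_{L_{\Delta}}$ gives a positive definite Hermitian form on $T_{0}\Pic^{0}(S_{\Delta})$ with respect to which $\rho'$ is unitary, i.e.\ $\rho'(g^{*}) = \rho'(g)^{*}$, making $(T_{0}\Pic^{0}(S_{\Delta}),H')$ a unitary spinor module for $\C_{q}(V)$.

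Third, since $\phi_{L_{\Delta}}$ identifies principal polarizations, the pushed-forward $H'$ is again integral of type $I_{2^{k} \times 2^{k}}$ on the image lattice, so it defines a principal polarization on $\Pic^{0}(S_{\Delta})$. The descended Clifford multiplication map $\widehat{\rho}' \colon \C_{q}(V)_{\Z} \to \End(\Pic^{0}(S_{\Delta}))$ is obtained by the same conjugation argument, and it preserves the dual lattice because $d\phi_{L_{\Delta}}$ carries $\Gamma_{\Delta}$ isomorphically to the dual lattice while $\rho$ preserves $\Gamma_{\Delta}$ on lattice points of $\C_{q}(V)_{\Z}$. Assembling these observations, $\Pic^{0}(S_{\Delta})$ satisfies every clause of the definition of a spinor Abelian variety.

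There is no serious obstacle in this argument: the content is really the classical statement that the dual of a PPAV is canonically a PPAV, and the Clifford datum is transported along the isomorphism in the only possible way. The one point that deserves explicit verification — and which I would record as a short lemma or a remark before the proposition — is that the transported representation $\rho'$ is still \emph{unitary} with respect to the transported Hermitian form; this follows immediately from $d\phi_{L_{\Delta}}$ being the differential of the polarization map associated to $H$ itself, so that $H$ and $H'$ correspond under the identification.
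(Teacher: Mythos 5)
Your argument is essentially the paper's own proof: both rest on the isomorphism $\phi_{L_{\Delta}}\colon S_{\Delta}\xrightarrow{\cong}\Pic^{0}(S_{\Delta})$, $\bar{\lambda}\mapsto t_{\bar{\lambda}}^{*}L_{\Delta}\otimes L_{\Delta}^{-1}$, furnished by the principal polarization, followed by transporting the principal polarization and the Clifford multiplication to $\Pic^{0}(S_{\Delta})$ by conjugation ($\rho^{*}=Ad_{\phi_{L_{\Delta}}}\circ\hat{\rho}$), with your version merely making the tangent-space bookkeeping (the differential $d\phi_{L_{\Delta}}$, the transported Hermitian form, unitarity of the conjugated representation) explicit where the paper leaves it implicit. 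The only blemish is peripheral: the degree of $\phi_{L}$ is $(d_{1}\cdots d_{g})^{2}=\det E$, not $\det(E)^{2}$, but this is immaterial in the principal case and does not affect your conclusion.
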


\begin{proof}
Let  $S_{\Delta}$ be a spinor Abelian variety  for the Clifford algebra $\C_{q}(V)_{\mathbb{Z}}$. Then $S_{\Delta}$ is a PPAV with Clifford multiplication given by $\hat{\rho}:\C_{q}(V)_{\mathbb{Z}}\rightarrow \End(S_{\Delta})$. One can  also define the principal polarization of $S_{\Delta}$ as a positive definite line bundle $\gls{principal polarization}\in \Pic^{H}(S_{\Delta})=\{L\in \Pic(S_{\Delta}):c_1(L)=H\}$ whose first Chern class is  $c_1(L_{\Delta})=H$, where $H$ is the positive definite Hermitian form on $\Delta$. 
Then the  principal polarization $L_{\Delta}$ defines an isomorphism $\phi_{L_{\Delta}}:S_{\Delta}\xrightarrow{\cong} \Pic^{0}(S_{\Delta})$ between $S_{\Delta}$ and $\Pic^{0}(S_{\Delta})$, given by  $\phi_{L_{\Delta}}(\bar{\lambda})=t^{*}_{\bar{\lambda}}L_{\Delta}\otimes L_{\Delta}^{-1}$ for any $\bar{\lambda}\in S_{\Delta}$, where $t^{*}_{\bar{\lambda}}:\Pic(S_{\Delta})\rightarrow \Pic(S_{\Delta})$ is the pullback of the line bundles in the Picard variety along the translation morphism $t_{\bar{\lambda}}:S_{\Delta}\rightarrow S_{\Delta}$  (see \cite[p.\ 36]{BL},\cite[p.\ 10]{Do}). Via this isomorphism, we can easily conclude that $\Pic^{0}(S_{\Delta})$ is a PPAV, where the required polarization on $\Pic^{0}(S_{\Delta})$ is given by the inverse isomorphism $\phi_{L_{\Delta}}^{-1}:\Pic^{0}(S_{\Delta})\rightarrow S_{\Delta}$, and the principal polarization is defined by $(\phi_{L_{\Delta}}^{-1})^{*}L_{\Delta}$. Now, to show that $\Pic^{0}(S_{\Delta})$ is a spinor Abelian variety, we need to properly define Clifford multiplication on it.  We first state that by the surjectivity of the isomorphism $\phi_{L_{\Delta}}:S_{\Delta}\xrightarrow{\cong} \Pic^{0}(S_{\Delta})$, for every class $M\in \Pic^{0}(S_{\Delta})$ we have a class $\hat{\mu}\in S_{\Delta}$ such that $\phi_{L_{\Delta}}(\hat{\mu})=t^{*}_{\bar{\mu}}L_{\Delta}\otimes L_{\Delta}^{-1}=M$. Hence we have  the equation $\phi^{-1}_{L_{\Delta}}(M)=\bar{\mu}$. By using the inverse of the isomorphism induced by the above polarization,  we  can extend Clifford multiplication onto  $\Pic^{0}(S_{\Delta})$  via $\rho^{*}:\C_{q}(V)_{\mathbb{Z}}\rightarrow \End(\Pic^{0}(S_{\Delta}))$, where $\rho^{*}=Ad_{\phi_{L_{\Delta}}}\circ \hat{\rho}$. That is, for any lattice point  $h\in \C_{q}(V)_{ \mathbb{Z}}$, we have the following diagram: 
\[
\begin{tikzcd}
S_{\Delta} \arrow[r, "\hat{\rho}_{h}"]
& S_{\Delta} \arrow[d, "\phi_{L_{\Delta}}"] \\
\Pic^{0}(S_{\Delta}) \arrow[u, "\phi_{L_{\Delta}}^{-1}"] \arrow[r, red, "{\rho}^{*}_{h}" blue]
&  \Pic^{0}(S_{\Delta}).
\end{tikzcd}
\]
This means that for any line bundle $M\in \Pic^{0}(S_{\Delta})$, we have $\rho^{*}_{h}(M)=\phi_{L_{\Delta}}\circ \hat{\rho_{h}} \circ \phi^{-1}_{L_{\Delta}}(M)$. With the induced Clifford multiplication on $\Pic^{0}(S_{\Delta})$, we conclude that $\Pic^{0}(S_{\Delta})$ is a PPAV with Clifford multiplication on the underlying  dual torus, hence  a spinor Abelian variety.
\end{proof}

Now, considering $\Pic^{0}(S_{\Delta})$ as a spinor Abelian variety, we have the immediate consequence that  the principal polarization on $S_{\Delta}$  is preserved by the integer Spin groups. 

\begin{cor}\label{translation thm Line bundles}
On the dual spinor Abelian variety $\Pic^{0}(S_{\Delta})$, consider any $L\in \Pic^{0}(S_{\Delta})$ and any generator $e_{I}\in\Gamma_{q}(V)$ of order $4$. Then we have  a  system of translation line bundles  $L_{M},L_{N}\in \Pic^{0}(S_{\Delta})$ satisfying  $ (L^{\vee})^{\otimes 2}=L_{M}\otimes L_{N}$ such that  
\[   \left\{
\begin{array}{ll}
   \rho^{*}_{e_{I}}(L)=L\otimes L_{M}     \\
      (\rho_{e_{I}}^{*})^{2}(L)=L\otimes L_{M} \otimes L_{N} \\
       (\rho_{e_{I}}^{*})^{3}(L)=L\otimes L_{N}  \\
(\rho_{e_{I}}^{*})^{4}(L)=L\otimes \mathcal{O}_{S_{\Delta}}\cong L \\
\end{array} 
\right. \]

Hence  any generator of order $4$ acting on a line bundle $L_{\bar{\lambda}}\in \Pic^{0}(S_{\Delta})$ generates the Clifford system of line bundles $\{ L_{M},L_{M}\otimes L_{N},L_{N},\mathcal{O}_{S_{\Delta}}\}$.
\end{cor}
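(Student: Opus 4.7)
\textit{Proof proposal:} The strategy is to transport Proposition \ref{translation thm} across the polarization isomorphism $\phi_{L_{\Delta}}: S_{\Delta} \xrightarrow{\cong} \Pic^{0}(S_{\Delta})$ constructed in Proposition \ref{dual spinor AV}. The key structural fact I will invoke is that $\phi_{L_{\Delta}}$ is a group homomorphism, i.e.\ $\phi_{L_{\Delta}}(\bar{\lambda}+\bar{\mu}) \cong \phi_{L_{\Delta}}(\bar{\lambda}) \otimes \phi_{L_{\Delta}}(\bar{\mu})$. This follows directly from the theorem of the square $t^{*}_{\bar{\lambda}+\bar{\mu}}L_{\Delta} \otimes L_{\Delta} \cong t^{*}_{\bar{\lambda}}L_{\Delta} \otimes t^{*}_{\bar{\mu}}L_{\Delta}$, since rearranging yields $t^{*}_{\bar{\lambda}+\bar{\mu}}L_{\Delta} \otimes L_{\Delta}^{-1} \cong (t^{*}_{\bar{\lambda}}L_{\Delta} \otimes L_{\Delta}^{-1}) \otimes (t^{*}_{\bar{\mu}}L_{\Delta} \otimes L_{\Delta}^{-1})$. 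Under this homomorphism, the additive translations $t_{M}, t_{N}$ on $S_{\Delta}$ correspond to multiplicative tensoring operations $(-)\otimes L_{M}$, $(-)\otimes L_{N}$ on $\Pic^{0}(S_{\Delta})$.

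For the setup, given $L \in \Pic^{0}(S_{\Delta})$, use surjectivity of $\phi_{L_{\Delta}}$ to write $L = \phi_{L_{\Delta}}(\bar{\lambda})$ for a unique $\bar{\lambda} \in S_{\Delta}$. Apply Proposition \ref{translation thm} to the generator $e_{I}$ of order $4$ acting on $\bar{\lambda}$ to obtain translation elements $M, N \in S_{\Delta}$ satisfying the four translation equations and the relation $\bar{\lambda}^{-1} = \tfrac{1}{2}(M+N)$. Then define the line bundles $L_{M} := \phi_{L_{\Delta}}(M)$ and $L_{N} := \phi_{L_{\Delta}}(N)$ in $\Pic^{0}(S_{\Delta})$.

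Next, use the commutative diagram from Proposition \ref{dual spinor AV}, namely $\rho^{*}_{h} = \phi_{L_{\Delta}} \circ \hat{\rho}_{h} \circ \phi^{-1}_{L_{\Delta}}$. Evaluating at $L$ yields
\[
\rho^{*}_{e_{I}}(L) = \phi_{L_{\Delta}}\bigl(\rho_{e_{I}}(\bar{\lambda})\bigr) = \phi_{L_{\Delta}}(\bar{\lambda} + M) = \phi_{L_{\Delta}}(\bar{\lambda}) \otimes \phi_{L_{\Delta}}(M) = L \otimes L_{M},
\]
using the group homomorphism property in the third equality. Iterating this computation with $\rho^{2}_{e_{I}}(\bar{\lambda}) = \bar{\lambda} + M + N$, $\rho^{3}_{e_{I}}(\bar{\lambda}) = \bar{\lambda} + N$, and $\rho^{4}_{e_{I}}(\bar{\lambda}) = \bar{\lambda}$ gives the remaining three identities, where $\phi_{L_{\Delta}}(0) = \mathcal{O}_{S_{\Delta}}$ produces the trivial bundle in the last equation. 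Finally, applying $\phi_{L_{\Delta}}$ to the relation $M + N = -2\bar{\lambda}$ from Proposition \ref{translation thm} gives
\[
L_{M} \otimes L_{N} = \phi_{L_{\Delta}}(M+N) = \phi_{L_{\Delta}}(-2\bar{\lambda}) = L^{\otimes -2} = (L^{\vee})^{\otimes 2},
\]
establishing the claimed normalization. The concluding orbit statement follows by specializing to the case $L = \mathcal{O}_{S_{\Delta}}$ (equivalently, $\bar{\lambda}=0$), where the four iterates reduce to $\{\mathcal{O}_{S_{\Delta}}, L_{M}, L_{M}\otimes L_{N}, L_{N}\}$ as listed.

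The main technical obstacle is really the bookkeeping around whether $\phi_{L_{\Delta}}$ genuinely respects the group structures on both sides, so that additive translation invariants on $S_{\Delta}$ pass to multiplicative tensor invariants on $\Pic^{0}(S_{\Delta})$. Once that homomorphism property is in hand, the corollary is a purely formal consequence of Proposition \ref{translation thm} conjugated by $\phi_{L_{\Delta}}$.
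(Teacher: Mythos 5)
Your argument is essentially the paper's own: transport Proposition~\ref{translation thm} across the polarization isomorphism $\phi_{L_{\Delta}}$. The paper does this too, though it re-derives the relation $(L^{\vee})^{\otimes 2}\cong L_{M}\otimes L_{N}$ and the identity $L_{O}\cong L_{M}^{\vee}$ from scratch on the line-bundle side by computing each $(\rho^{*}_{e_{I}})^{j}(L)$ from two perspectives, whereas you simply push all four equations from Proposition~\ref{translation thm} directly through $\phi_{L_{\Delta}}$. Your version is cleaner, and your explicit appeal to the theorem of the square to justify $\phi_{L_{\Delta}}(\bar{\lambda}+\bar{\mu})\cong\phi_{L_{\Delta}}(\bar{\lambda})\otimes\phi_{L_{\Delta}}(\bar{\mu})$ supplies a step the paper uses tacitly.

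One caveat: your reading of the concluding sentence is off. You propose specializing to $L=\mathcal{O}_{S_{\Delta}}$, i.e.\ $\bar{\lambda}=0$; but $\rho_{e_{I}}$ is a linear endomorphism of $\Delta$ fixing the origin, so $\rho_{e_{I}}(0)=0$ forces $M=N=0$ and hence $L_{M}=L_{N}=\mathcal{O}_{S_{\Delta}}$, collapsing the whole orbit. The intended meaning is not a specialization of $L$: the ``Clifford system'' $\{L_{M},\,L_{M}\otimes L_{N},\,L_{N},\,\mathcal{O}_{S_{\Delta}}\}$ is simply the set of tensor offsets $(\rho^{*}_{e_{I}})^{j}(L)\otimes L^{\vee}$ for $j=1,2,3,4$, read off directly from the four displayed equations for an arbitrary $L$. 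Replace the specialization argument with this restatement and the proof is complete.
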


\begin{proof}
Fix a generator $e_{I}\in\hat{\Gamma}_{q}(V)$ of order $4$ and a line bundle in the Picard group $L\in \Pic^{0}(S_{\Delta})$ such that under the isomorphism $\phi_{L_{\Delta}}$ induced by the principal polarization $L_{\Delta}$, the preimage of this line bundle is in some class $\bar{\lambda}\in S_{\Delta}$ such that $\phi_{L_{\Delta}}(\bar{\lambda})=L$. As we saw in the proof of Proposition \ref{dual spinor AV}, Clifford multiplication can be defined as $\rho^{*}_{e_{I}}(L)=\phi_{L_{\Delta}}\circ \rho_{e_{I}}\circ\phi^{-1}_{L_{\Delta}} (L).$ Then we have 
$$\rho^{*}_{e_{I}}(L)=\phi_{L_{\Delta}}\circ \rho_{e_{I}}(\bar{\lambda})=\phi_{L_{\Delta}}(\bar{\lambda}+M)=\phi_{L_{\Delta}}(\bar{\lambda})\otimes \phi_{L_{\Delta}}(M)=L\otimes L_{M},$$ where we define  $L_{M}:=\phi_{L_{\Delta}}(M)$. Now, composing this action with itself, we obtain $ (\rho_{e_{I}}^{*})^{2}(L)=\phi_{L_{\Delta}}\circ \rho^2_{e_{I}}\circ\phi^{-1}_{L_{\Delta}} (L)=\phi_{L_{\Delta}}\circ \rho^2_{e_{I}}(\bar{\lambda})=\phi_{L_{\Delta}}(-\bar{\lambda})=L^{\vee}$. Considering this same action from a different perspective, we obtain
\begin{align*}
(\rho_{e_{I}}^{*})^{2}(L) & =\rho^{*}_{e_{I}}(L\otimes M) = \phi_{L_{\Delta}}\circ \rho_{e_{I}}(\bar{\lambda}+M) \\
& = \phi_{L_{\Delta}}(\bar{\lambda}+M+N)  = \phi_{L_{\Delta}}(\bar{\lambda})\otimes \phi_{L_{\Delta}}(M)\otimes \phi_{L_{\Delta}}(N) \\
& = L\otimes L_{M}\otimes L_{N},
\end{align*}
where we define  $L_{N}:=\phi_{L_{\Delta}}(N)$. Considering both of the above expressions for $  (\rho_{e_{I}}^{*})^{2}(L)$, we obtain the equation $L\otimes L_{M}\otimes L_{N}\cong L^{\vee}$. This gives  us the line bundle equation $(L^{\vee})^{\otimes 2}=L_{M}\otimes L_{N}$. Continuing this process,   we get 
\[ (\rho_{e_{I}}^{*})^{3}(L)=\phi_{L_{\Delta}}\circ \rho^3_{e_{I}}\circ\phi^{-1}_{L_{\Delta}} (L)=\phi_{L_{\Delta}}\circ \rho^3_{e_{I}}(\bar{\lambda})=\phi_{L_{\Delta}}(-(\bar{\lambda}+M))=L^{\vee}\otimes {M}^{\vee}.\] 
Once again, if we view  this same action from a different perspective, we obtain
\begin{align*}
(\rho_{e_{I}}^{*})^{3}(L) & =\rho^{*}_{e_{I}}(L\otimes L_{M}\otimes L_{N})=\phi_{L_{\Delta}}\circ \rho_{e_{I}}(\bar{\lambda}+M+N) \\ & = \phi_{L_{\Delta}}(\bar{\lambda}+M+N+O)=\phi_{L_{\Delta}}(\bar{\lambda})\otimes \phi_{L_{\Delta}}(M)\otimes \phi_{L_{\Delta}}(N)\otimes \phi_{L_{\Delta}}(O) \\
& =L\otimes L_{M}\otimes L_{N}\otimes L_{O},
\end{align*}
where we define  $L_{O}:=\phi_{L_{\Delta}}(O)$. Considering  both expressions for $  (\rho_{e_{I}}^{*})^{3}(L)$, we obtain the  equation $L\otimes L_{M}\otimes L_{N}\otimes L_{O}\cong L^{\vee}\otimes L_{M}^{\vee}$. Hence we have  $(L^{\vee})^{\otimes 2}\cong  L_{M}^{\otimes 2}\otimes L_{N}\otimes L_{O}$. Now taking both expressions for $(L^{\vee})^{\otimes 2}$, we get  $L^{\otimes 2}\otimes L_{N}\otimes L_{O}\cong L_{M}\otimes L_{N}$. Thus we have   $L_{O}\cong L_{M}^{\vee}$, providing us with the conclusion 
\[(\rho_{e_{I}}^{*})^{3}(L)\cong L\otimes L_{M}\otimes L_{N}\otimes L_{O}\cong L\otimes L_{M}\otimes L_{N}\otimes L^{\vee}_{M}\cong L\otimes L_{N}.\] Continuing this procedure, one can easily deduce that $ (\rho_{e_{I}}^{*})^{4}(L)\cong L\otimes \mathcal{O}_{S_{\Delta}}\cong L.$ Since our choice of a line bundle and  a generator of order $4$ were completely arbitrary, we conclude that for  any generator of order $4$, the Clifford system of line bundles $\{ L_{M},L_{M}\otimes L_{N},L_{N},\mathcal{O}_{S_{\Delta}}\}$ is associated to each subsequent action on $L$.

\end{proof}

From the preceding discussion, we see that $L_{M},L_{N}$, and $L_{O}$ depend on  $L\in \Pic(S_{\Delta})$ and the endomorphisms associated to the generator $e_{I}$; hence these equations are dependent on the choice of generator $e_{I}$ and  $L\in \Pic^{0}(S_{\Delta})$. Thus we introduce the following definition.

\begin{defn}
For any generator $e_{I}\in\hat{\Gamma}_{q}(V)$ and any line bundle  $L\in \Pic^{0}(S_{\Delta})$, we define the translation bundles   $L_{M},L_{N}$ (i.e., as above, line bundles defining all orders of  Clifford multiplication on $L$ by a lattice point $e_{I}\in \Gamma_{q}(V)$)  as  \textbf{the  Clifford line bundles associated to multiplication by a lattice point $e_{I}$.}
\end{defn}

Now we extend this property for  points of order 2 onto $\Pic^{0}(S_{\Delta})_2$.

\begin{cor}\label{2-torsion line bundle cor}
Consider the subgroup of line bundles of order $2$, $\Pic^{0}(S_{\Delta})_2=\{L\in \Pic^{0}(S_{\Delta}):L^{\otimes 2}\cong \mathcal{O}_{S_{\Delta}}\}$. The  actions by any generator $e_{I}\in\Gamma_{q}(V)$ of any order greater than one yields one Clifford translation bundle $L_{M}$, which is itself a line bundle of order $2$.
\end{cor}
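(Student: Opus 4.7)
The plan is to transport the $2$-torsion result (Corollary~\ref{2 torsion corrollary}) from $S_{\Delta}$ across to $\Pic^{0}(S_{\Delta})$ via the polarization isomorphism $\phi_{L_{\Delta}}:S_{\Delta}\xrightarrow{\cong}\Pic^{0}(S_{\Delta})$ constructed in Proposition~\ref{dual spinor AV}. Since $\phi_{L_{\Delta}}$ is a homomorphism of complex Abelian varieties, it restricts to a group isomorphism between the $2$-torsion subgroup $J_{2}^{S_{\Delta}}$ of $S_{\Delta}$ and the subgroup $\Pic^{0}(S_{\Delta})_{2}$ of order-$2$ line bundles. This is the key reduction: any $L\in\Pic^{0}(S_{\Delta})_{2}$ is of the form $L=\phi_{L_{\Delta}}(\epsilon)$ for a unique $\epsilon\in J_{2}^{S_{\Delta}}$.

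First, I would fix a generator $e_{I}\in\hat{\Gamma}_{q}(V)$ of order greater than one and a line bundle $L\in\Pic^{0}(S_{\Delta})_{2}$, and set $\epsilon:=\phi_{L_{\Delta}}^{-1}(L)\in J_{2}^{S_{\Delta}}$. By Corollary~\ref{2 torsion corrollary}, the translation elements $M,N\in S_{\Delta}$ for the $e_{I}$-action on $\epsilon$ satisfy $M+N=0$ and $2M=0$, so $N=M$ and $M$ is itself a $2$-torsion point. Next, I would invoke the definition of $\rho^{*}$ on $\Pic^{0}(S_{\Delta})$ established in Proposition~\ref{dual spinor AV}, which gives $\rho^{*}_{e_{I}}(L)=\phi_{L_{\Delta}}(\epsilon+M)=L\otimes L_{M}$, where $L_{M}:=\phi_{L_{\Delta}}(M)$. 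This identifies the Clifford translation bundle in the sense of Corollary~\ref{translation thm Line bundles}.

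Finally, because $\phi_{L_{\Delta}}$ is a group homomorphism, $2M=0$ forces $L_{M}^{\otimes 2}\cong\phi_{L_{\Delta}}(2M)\cong\phi_{L_{\Delta}}(0)\cong\mathcal{O}_{S_{\Delta}}$, so $L_{M}\in\Pic^{0}(S_{\Delta})_{2}$. The identity $N=M$ likewise yields $L_{N}\cong L_{M}$, so there is only a single Clifford translation bundle, matching the statement of the corollary. As a sanity check, one verifies consistency with Corollary~\ref{translation thm Line bundles}: the relation $(L^{\vee})^{\otimes 2}=L_{M}\otimes L_{N}$ collapses to $\mathcal{O}_{S_{\Delta}}\cong L_{M}^{\otimes 2}$, which is exactly the $2$-torsion condition on $L_{M}$.

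I do not expect a serious obstacle: the argument is essentially an application of functoriality of the polarization isomorphism, which conjugates Clifford multiplication on $S_{\Delta}$ to Clifford multiplication on $\Pic^{0}(S_{\Delta})$ and simultaneously matches $2$-torsion subgroups. The only small care needed is to ensure that the Clifford translation bundle $L_{M}$ in the sense of Corollary~\ref{translation thm Line bundles} really agrees with $\phi_{L_{\Delta}}(M)$ for the translation element $M$ of Corollary~\ref{2 torsion corrollary}; this is immediate from the commutative square in the proof of Proposition~\ref{dual spinor AV}, which is the only nontrivial ingredient being borrowed.
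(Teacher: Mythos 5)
Your proposal is correct, but it takes a genuinely different route from the paper. The paper's proof stays entirely in the line-bundle language on $\Pic^{0}(S_{\Delta})$: it starts from the relation $(L^{\vee})^{\otimes 2}\cong L_{M}\otimes L_{N}$ supplied by Corollary~\ref{translation thm Line bundles}, uses $L^{\vee}\cong L$ to obtain $L_{M}\otimes L_{N}\cong\mathcal{O}_{S_{\Delta}}$, and then separately computes $\rho^{*}_{e_{I}}(L^{\otimes 2})$ in two ways to force $L_{M}^{\otimes 2}\cong\mathcal{O}_{S_{\Delta}}$. In effect the paper re-derives, on the dual side, the same argument that already proved Corollary~\ref{2 torsion corrollary}. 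You instead reduce directly to Corollary~\ref{2 torsion corrollary}: pull $L$ back along $\phi_{L_{\Delta}}$ to a $2$-torsion point $\epsilon$, apply the existing result there to get $2M=0$ and $N=M$, and push forward using the identification $L_{M}=\phi_{L_{\Delta}}(M)$ already fixed in the proof of Corollary~\ref{translation thm Line bundles}. Your route is more economical and better reflects the logical structure of the section --- the only new ingredient is the elementary observation that a group isomorphism of Abelian varieties matches $n$-torsion subgroups --- whereas the paper's route has the mild virtue of being phrased self-containedly in $\Pic^{0}$-language without explicitly invoking the earlier corollary. Both proofs are correct; yours avoids a computation the paper repeats.
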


\begin{proof}
Choose a generator $e_{I}\in\Gamma_{q}(V)$  of any  order greater than one. Choose a line bundle of order $2$, i.e.\  $L\in \Pic^{0}(S_{\Delta})_2$. Now by Corollary \ref{translation thm Line bundles}, we can write  $(L^{\vee})^{\otimes 2}\cong L_{M}\otimes L_{N}$ for the Clifford translation line bundles $L_{M},L_{N}$ associated with  the action of $e_{I}$ on $L$. Since $L\in \Pic^{0}(S_{\Delta})_2$, we have $L^{\vee}=L$, and hence  we can immediately deduce that $(L^{\vee})^{\otimes 2}\cong  L^{\otimes 2}\cong \mathcal{O}_{S_{\Delta}}\cong L_{M}\otimes L_{N}$. Therefore we have $L_{N}\cong  L^{\vee}_{M}$. Taking the induced representation of $L \otimes L$, we get $\rho^{*}(L^{\otimes 2})=\phi_{L_{\Delta}}\circ \rho_{e_{I}}\circ \phi^{-1}_{L_{\Delta}}(L\otimes L)=\phi_{L_{\Delta}}\circ \rho_{e_{I}}(2\bar{\lambda})=\phi_{L_{\Delta}}(2\rho_{e_{I}}(\lambda))=\phi_{L_{\Delta}}(\bar{\lambda})^{\otimes 2}=(L\otimes M)^{\otimes 2}=L^{\otimes 2}\otimes L_M^{\otimes 2}\cong \mathcal{O}_{S_{\Delta}}\otimes L_M^{\otimes 2}\cong L_ M^{\otimes 2}.$ But also, since $L\in \Pic^{0}(S_{\Delta})_2$, we have $L^{\otimes 2}\cong \mathcal{O}_{S_{\Delta}}$,  so that  $\rho^{*}_{e_{I}}(L^{\otimes 2})\cong\rho^{*}_{e_{I}}(\mathcal{O}_{S_{\Delta}})\cong \mathcal{O}_{S_{\Delta}}$. Now by setting both expressions for $\rho^{*}(L^{\otimes 2})$ equal to one another, we obtain $L_M^{\otimes 2}\cong\mathcal{O}_{S_{\Delta}}$. This forces $M$ to be a line bundle of order $2$. Moreover, $L_{N}=L_{M}^{\vee}$. Hence $L_{N}=L_{M}$. Therefore we conclude that each action by a Clifford generator only generates one Clifford translation bundle $L_{M}$, which is itself a line bundle of order $2$.

\end{proof}

Having established some of the elementary properties of our spinor Abelian varieties, we now shift our focus to some immediate intrinsic properties revealed by the study of their endomorphism rings.

\subsection{The endomorphism structure of spinor Abelian varieties}
In this section we examine the endomorphism  ring of our spinor Abelian variety $S_{\Delta}$ of dimension $2^{k}$.  The following lemma examines the relationship between the analytic representations, Clifford multiplication, and the spinor representations, through what we call the ``losing your hat lemma''.

\begin{lem}[Losing your hat lemma]\label{Losing hat}
For the spinor Abelian variety  $S_{\Delta}$, the analytic representation $\tau_{a}: \End(S_{\Delta})\rightarrow \End(\Delta)$ satisfies the property $\tau_{a}(\hat{\rho}(h))=\rho(h)$ for any $h\in\C_{q}(V)_{\mathbb{Z}}$.
    
\end{lem}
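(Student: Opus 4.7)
The plan is to unpack definitions and appeal to the uniqueness of the analytic lift across the universal cover. First I would verify that for any $h \in \C_{q}(V)_{\mathbb{Z}}$, the operator $\rho(h) \in \End(\Delta)$ descends to $S_{\Delta} = \Delta/\Gamma_{\Delta}$. Since $\rho$ is a $\C$-algebra isomorphism, $\rho(h)$ is $\C$-linear; moreover, the lattice $\Gamma_{\Delta}$ has been chosen precisely so that $\rho(h)(\Gamma_{\Delta}) \subseteq \Gamma_{\Delta}$, as stipulated in the discussion following the definition of Clifford multiplication on $S_{\Delta}$. Consequently $\rho(h)$ induces a well-defined holomorphic endomorphism of the quotient, and by construction this induced map is exactly $\hat{\rho}(h)$. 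Equivalently, writing $\pi \colon \Delta \to S_{\Delta}$ for the quotient map, the diagram
\begin{equation*}
\begin{tikzcd}
\Delta \arrow[r, "\rho(h)"] \arrow[d, "\pi"'] & \Delta \arrow[d, "\pi"] \\
S_{\Delta} \arrow[r, "\hat{\rho}(h)"'] & S_{\Delta}
\end{tikzcd}
\end{equation*}
commutes.

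Next I would invoke the defining property of the analytic representation. For any $f \in \End(S_{\Delta})$, $\tau_{a}(f) = f_{a}$ is the unique $\C$-linear endomorphism of $\Delta$ satisfying $\pi \circ f_{a} = f \circ \pi$. Uniqueness follows from the fact that $\pi$ is a covering map of complex Lie groups, so any two $\C$-linear lifts of the same torus endomorphism that agree at the origin must agree everywhere on $\Delta$. But the commutative square above exhibits $\rho(h)$ itself as a $\C$-linear lift of $\hat{\rho}(h)$, so by uniqueness $\tau_{a}(\hat{\rho}(h)) = \rho(h)$, which is the desired identity.

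The main obstacle---such as it is---is essentially nonexistent: the lemma is a bookkeeping identity recording that descent from $\Delta$ to $S_{\Delta}$ and analytic lifting back to $\Delta$ are mutually inverse operations on this particular family of endomorphisms. The only genuinely nontrivial ingredient, namely the fact that $\rho(h)$ preserves $\Gamma_{\Delta}$, is already baked into the definition of $\hat{\rho}$; once one grants that, the proof reduces to diagram chasing combined with the universal property of the covering $\pi \colon \Delta \to S_{\Delta}$.
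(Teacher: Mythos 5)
Your proof is correct and follows essentially the same route as the paper's: both arguments amount to unpacking the fact that $\hat{\rho}(h)$ is by construction the descent of $\rho(h)$ through the quotient $\pi\colon \Delta \to S_{\Delta}$, so that $\rho(h)$ is the analytic lift of $\hat{\rho}(h)$. Your version is in fact slightly more careful, since you make explicit the uniqueness of the $\C$-linear lift (via the covering-map argument) that the paper's proof leaves implicit.
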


\begin{proof}

For any spinor Abelian variety, Clifford multiplication  $\hat{\rho}:\C_{q}(V)_{\mathbb{Z}}\rightarrow \End(S_{\Delta})$ is the ring homomorphism obtained by the restriction of the isomorphism $\rho:\C_{q}(V)\xrightarrow{\cong} \End(\Delta)$. Now if we fix an element $h\in\C_{q}(V)_{\mathbb{Z}}$, the endomorphism $\hat{\rho}(h)\in \End(S_{\Delta})$ can be viewed as the restriction  $\rho\big|_{\C_{q}(V)_{\mathbb{Z}}}(h)\in \End(S_{\Delta})$. Thus it is clear that the spinor representation $\rho$ defines Clifford multiplication on $S_{\Delta}$ by any lattice element $h\in\C_{q}(V)_{\mathbb{Z}}$. Then we can view the analytic endomorphism $\tau_{a}:\End(S_{\Delta})\rightarrow \End(\Delta)$ for any endomorphism of the form $\hat{\rho}(h)$, for a lattice element $h\in\C_{q}(V)_{\mathbb{Z}}$, as  $\tau_{a}(\hat{\rho}(h))=\rho(h)$. This is because for any endomorphism $\hat{\rho}(h)\in \End(S_{\Delta})$, there is an endomorphism $\rho(h)\in \End(\Delta)$ that defines it.  
    
\end{proof}

Thus for Clifford multiplication $\hat{\rho}$, composing it with the analytic representation $\tau_{a}$ gives us $\rho$, losing the hat on Clifford multiplication and providing us with the following commutative diagram.

$$\begin{tikzcd}
 & \End(S_{\Delta})\arrow{dr}{\tau_{a}} \\
\C_{q}(V)_{\mathbb{Z}} \arrow{ur}{\hat{\rho}} \arrow{rr}{\tau_{a}\circ \hat{\rho}=\rho} && \End(\Delta)
\end{tikzcd}$$.

With Lemma \ref{Losing hat}, we are able  to prove the following proposition. 

\begin{prop}\label{endomorphism isomorphism}
For a spinor Abelian variety $S_{\Delta}$ with Clifford multiplication given by   $\C_{q}(V)_{\mathbb{Z}}$-lattice actions, we have the ring isomorphism  $\C_{q}(V)_{\mathbb{Z}}\cong \End(S_{\Delta})$.
    
\end{prop}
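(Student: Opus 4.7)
The plan is to verify directly that Clifford multiplication
\[\hat{\rho}: \C_{q}(V)_{\mathbb{Z}} \longrightarrow \End(S_\Delta)\]
is a bijective ring homomorphism. That $\hat{\rho}$ is a ring homomorphism is built into its construction as the descension of the $\C$-algebra isomorphism $\rho: \C_{q}(V) \xrightarrow{\cong} \End(\Delta)$ to the integral sublattice $\C_{q}(V)_{\mathbb{Z}}$; the compatibility of $\Gamma_\Delta$ with $\C_{q}(V)_{\mathbb{Z}}$ noted in the definition of Clifford multiplication ensures that the restriction lands in $\End(S_\Delta)$ and respects multiplication.

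Injectivity will follow immediately from Lemma \ref{Losing hat}, which provides the factorization $\tau_a \circ \hat{\rho} = \rho$. Since $\rho$ is an algebra isomorphism (in particular injective) and the analytic representation $\tau_a$ is injective by the very definition of analytic/rational representations, the composition being injective forces $\hat{\rho}$ itself to be injective.

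The main obstacle is surjectivity, which I would attack by combining a rank count with a lifting argument. On one hand, $\dim_\C \C_{q}(V) = 2^{2k}$, so the lattice $\C_{q}(V)_{\mathbb{Z}} = C_{q}(V)_{\mathbb{Z}} \otimes_{\mathbb{Z}} \mathbb{Z}[i]$ is free abelian of $\mathbb{Z}$-rank $2^{2k+1}$. On the other hand, $\End(S_\Delta)$ has $\mathbb{Z}$-rank at most $2(\dim_\C S_\Delta)^2 = 2 \cdot (2^k)^2 = 2^{2k+1}$ by the general bound on endomorphism rings of complex tori recalled in Section~2. Since $\hat{\rho}$ is injective, its image is already a sublattice of the maximal possible rank, so $\End(S_\Delta)$ attains this bound and the image of $\hat{\rho}$ has finite index in $\End(S_\Delta)$. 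To upgrade finite index to honest surjectivity, I would invoke the following lifting: given any $f \in \End(S_\Delta)$, the analytic representation yields a $\C$-linear endomorphism $f_a \in \End_\C(\Delta)$ that preserves $\Gamma_\Delta$; since $\rho$ is a $\C$-algebra isomorphism, there is a unique $h \in \C_{q}(V)$ with $\rho(h) = f_a$. The integrality compatibility built into the spinor torus setup — namely that $\C_{q}(V)_{\mathbb{Z}}$ is precisely the full-rank lattice whose image under $\rho$ preserves $\Gamma_\Delta$ — then forces $h \in \C_{q}(V)_{\mathbb{Z}}$, whence $f = \hat{\rho}(h)$. Together with the injectivity and ring-homomorphism properties established above, this yields the desired isomorphism $\C_{q}(V)_{\mathbb{Z}} \cong \End(S_\Delta)$.
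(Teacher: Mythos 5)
Your proof matches the paper's essentially step for step: injectivity via the losing-your-hat factorization $\tau_a\circ\hat{\rho}=\rho$, and surjectivity by lifting $f\in\End(S_\Delta)$ through $\tau_a$ to some $g\in\C_q(V)$ with $\rho(g)=\tau_a(f)$ and then using preservation of $\Gamma_\Delta$ to conclude $g\in\C_q(V)_\mathbb{Z}$. The preliminary rank count (comparing $\operatorname{rk}\C_q(V)_\mathbb{Z}=2^{2k+1}$ with the upper bound $2(\dim_\C S_\Delta)^2$) is a sound extra observation but is not logically needed once you have the lifting argument, and indeed the paper omits it; you are also slightly more explicit than the paper in naming the integrality hypothesis — that $\C_q(V)_\mathbb{Z}$ is exactly the set of $h$ with $\rho(h)\Gamma_\Delta\subseteq\Gamma_\Delta$ — which both proofs ultimately rely upon.
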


\begin{proof}
For spinor Abelian varieties, we have Clifford multiplication given by the ring homomorphism $\hat{\rho}:\C_{q}(V)_{\mathbb{Z}}\rightarrow \End(S_{\Delta})$. 
Now suppose that for $h,g\in\C_{q}(V)$, $\hat{\rho}(h)=\hat{\rho}(g)$. Extending this equality via the analytic representation, we have $\tau_{a}(\hat{\rho}(h))=\tau_{a}(\hat{\rho}(g))$. Then by Lemma \ref{Losing hat},  this equality implies that $\rho(h)=\rho(g)$ in $\End(\Delta)$. Now since $\End(\Delta)\cong \C_{q}(V)$, we can take inverses to conclude that $h=g$, and hence $\hat{\rho}$ is an injective ring homomorphism. To prove surjectivity, choose an arbitrary endomorphism $f\in \End(S_{\Delta})$. Taking its analytic representation gives us $\tau_{a}(f)\in \End(\Delta)$. Now since we have the isomorphism $\C_{q}(V)\cong \End(\Delta)$, there exists an element $g\in\C_{q}(V)$ such that $\rho(g)=\tau_{a}(f)$. Moreover, the analytic representation $\tau_{a}(f)$, when restricted to the full rank lattice $\Gamma_{\Delta}$, is  a $\mathbb{Z}$-linear map on $\Gamma_{\Delta}$. Thus we have $\tau_{a}(f)\big |_{\Gamma_{\Delta}}\in \End_{\mathbb{Z}}(\Gamma_{\Delta})$, implying that $\tau_{a}(f)$ preserves the full rank lattice $\Gamma_{\Delta}\subset \Delta$. 
Now since $\tau_{a}(f)=\rho(g)$, it follows that
 $\rho(g)$ in $\textrm{End}(\Delta)$ is the image of some $g\in\C_{q}(V)$ that has the additional property of preserving the full rank lattice $\Gamma_{\Delta}$, and so we have $g\in\C_{q}(V)_{\mathbb{Z}}\subset\C_{q}(V)$. Thus, since our choice of endomorphism was arbitrary, we have that for any $f\in \End(S_{\Delta})$  there exists an element $g\in \C_{q}(V)_{\mathbb{Z}}$ such that $\hat{\rho}(g)=f$, implying that $\hat{\rho}$ is an isomorphism. 
\end{proof}

From Proposition \ref{endomorphism isomorphism}, we have the understanding that for any endomorphism of $S_{\Delta}$, there exists a lattice element that defines it. Therefore all we need to know, in order to understand the structure of the  endomorphism ring  of our spinor Abelian variety, is to understand the structure of the integer subring $\C_{q}(V)_{\mathbb{Z}}$. We also have the following corollary.

\begin{cor}\label{Automorphisms of spinors}
For our spinor Abelian variety $S_{\Delta}$ we have $\Aut(S_{\Delta})\cong \hat{\Gamma}_{q}^{c}(V)$.
\end{cor}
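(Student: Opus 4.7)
The plan is to combine Proposition \ref{endomorphism isomorphism} with the polarization structure on $S_\Delta$: first reduce to a calculation of units in the integral subring $\C_q(V)_{\mathbb{Z}}$, then use that any automorphism of the principally polarized variety must preserve the Hermitian form $H$, and finally count the integral elements satisfying this unitarity condition.

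First, by Proposition \ref{endomorphism isomorphism} there is a ring isomorphism $\hat{\rho}\colon \C_q(V)_{\mathbb{Z}} \xrightarrow{\cong} \End(S_\Delta)$, which restricts to an isomorphism of unit groups. The inclusion $\hat{\Gamma}^{c}_{q}(V) \hookrightarrow \operatorname{Aut}(S_\Delta)$ is immediate: every generator $e_{I}$ satisfies $e_{I}^{m} = \pm 1$ for some $m \in \{1,2,4\}$, so each of $\pm e_{I}$ and $\pm i\, e_{I}$ is a unit in $\C_q(V)_{\mathbb{Z}}$ whose inverse also lies in $\C_q(V)_{\mathbb{Z}}$, and hence maps to an automorphism of $S_\Delta$ via $\hat{\rho}$.

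For the reverse inclusion I would take $u \in \C_q(V)_{\mathbb{Z}}$ with $\hat{\rho}(u) \in \operatorname{Aut}(S_\Delta)$ and argue that $u \in \hat{\Gamma}^{c}_{q}(V)$. Since the principal polarization on $S_\Delta$ is determined by the Hermitian form $H$ on $\Delta$, the automorphism must preserve $H$ under its analytic representation. Lemma \ref{Losing hat} identifies this analytic representation with $\rho(u)$, so $\rho(u)$ is $H$-unitary. The defining compatibility $\rho(u^{*}) = \rho(u)^{*}$ of the unitary spinor module then forces $u^{*}u = 1$ in $\C_q(V)$, placing $u$ in $\Pin_{c}(V)$. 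The claim thereby reduces to the purely algebraic identity $\Pin_{c}(V) \cap \C_q(V)_{\mathbb{Z}} = \hat{\Gamma}^{c}_{q}(V)$.

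This intersection statement is the main obstacle, though it is expected to fall out as follows. For the standard antilinear involution making $(\Delta, H)$ a unitary spinor module, the pairing $\langle a, b \rangle := \mathrm{scal}(a^{*}b)$ is a positive-definite Hermitian inner product on $\C_q(V)$ in which the basis $\{e_{I} : I \subseteq [n]\}$ is orthonormal. Expanding $u = \sum_{I} a_{I}\, e_{I}$ with $a_{I} \in \mathbb{Z}[i]$, the equation $u^{*}u = 1$ projects onto the scalar part to yield $\sum_{I} |a_{I}|^{2} = 1$ as an identity of nonnegative integers. Exactly one coefficient $a_{I}$ must therefore be a Gaussian unit ($\pm 1$ or $\pm i$) while all the others vanish, placing $u$ in $\{\pm e_{I},\ \pm i\, e_{I} : I \subseteq [n]\} = \hat{\Gamma}^{c}_{q}(V)$ and closing the isomorphism.
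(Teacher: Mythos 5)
Your proposal genuinely diverges from the paper's proof: the paper argues directly that the units of the ring $\C_q(V)_{\mathbb{Z}}$ are exactly the $\pm e_I$, $\pm ie_I$, stated without any justification, whereas you try to supply the missing justification by routing through the polarization and the Pin group. The second half of your argument — showing $\Pin_c(V)\cap \C_q(V)_{\mathbb{Z}}=\hat\Gamma^c_q(V)$ via the trace form $\langle a,b\rangle=\mathrm{scal}(a^*b)$ and the integer identity $\sum_I|a_I|^2=1$ — is clean and correct (provided the involution $*$ is chosen, as the paper suggests is possible, so that $e_I^*=e_I^{-1}$, making $\{e_I\}$ orthonormal; otherwise that orthonormality fails). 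That computation is a real improvement on the paper, which never proves that the generators exhaust the units.

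However, there is a genuine gap in the reduction step. You assert that any automorphism of $S_\Delta$ must preserve the Hermitian form $H$ under its analytic representation. That is false in the context the paper is working in: the paper explicitly takes $\operatorname{Aut}(S_\Delta)$ to be the group of units of $\End(S_\Delta)$, i.e.\ automorphisms of the complex torus, and such an automorphism only needs to preserve the lattice $\Gamma_\Delta$, not $H$. Polarization-preserving automorphisms form a proper subgroup in general (this is precisely why the Lefschetz finiteness theorem has content). Concretely, in the Dirac case $\C_2 \cong \C(2)$ the element $u=1+e_1+ie_2$ lies in $(\C_2)_{\mathbb{Z}}$, has inverse $u^{-1}=1-e_1-ie_2$ again in $(\C_2)_{\mathbb{Z}}$, and one checks directly that $\rho(u)$ is not $H$-unitary and that $u\notin\hat\Gamma^c_2$. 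So the claim ``$\hat\rho(u)\in\operatorname{Aut}(S_\Delta)\Rightarrow\rho(u)$ unitary'' cannot hold. Your argument actually proves a correct statement about the subgroup of \emph{polarization-preserving} automorphisms; to match the paper's (broader) notion of $\operatorname{Aut}$ one would need a different — and, as the example above suggests, impossible — argument, and the paper's own unproved assertion about the units of $\C_q(V)_{\mathbb{Z}}$ is where the real difficulty is hidden.
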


\begin{proof}
Note that $\Aut(S_{\Delta})$ is the group of units of $\End(S_{\Delta})$, and that by Proposition \ref{endomorphism isomorphism} we have $\C_{q}(V)_{\mathbb{Z}}\cong \End(S_{\Delta})$. Then to find the automorphism group of $S_{\Delta}$, we just need to restrict our attention to the units of the integer subring $\C_{q}(V)_{\mathbb{Z}}$, which are all of the generators $e_{I}$ that generate the real algebra $C_{q}(V)$, and their imaginary generators $ie_{I}$. These generators form the multiplicative group of generators of $\C_{q}(V)$, denoted $\hat{\Gamma}_{q}^{c}(V)$. Hence we have $\hat{\Gamma}^{c}_{q}(V)\cong  \Aut (S_{\Delta})$.
\end{proof}

From Proposition \ref{endomorphism isomorphism} and Corollary \ref{Automorphisms of spinors}, we have a good understanding of the endomorphism ring and automorphism group of our spinor Abelian variety $S_{\Delta}$. Hence we can think of $S_{\Delta}$ as a spinor space for the lattice $\C_{q}(V)_{\mathbb{Z}}$, since $\End(S_{\Delta})\cong \C_{q}(V)_{\mathbb{Z}}$. Knowing the structure of $\C_{q}(V)_{\mathbb{Z}}$ and the multiplicative group of generators provides us with knowledge about the endomorphisms and automorphisms of $S_{\Delta}$. 
\begin{remark}
Another way to see that the multiplicative generators are automorphisms comes from the fact that they preserve the polarization, since they are a subgroup of the $\Pin^{c}(V)$ group, which we know (see \cite[p.\ 74]{Me}) preserves the Hermitian form on our spinor module.
\end{remark}

With respect to intrinsic properties of our spinor Abelian varieties, we  can now prove the following decomposition theorem.
\begin{thm}\label{full decomposition}
A spinor Abelian variety $S_{\Delta}$ is fully decomposable, as a spinor Abelian variety,  as a product of $2^{k}$ copies of the elliptic curve $E_{i}$ of $j$-invariant $1728$.
\end{thm}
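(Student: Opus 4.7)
The plan is to combine Proposition \ref{endomorphism isomorphism}, Proposition \ref{Decomposition}, and Proposition \ref{Lange thm}, using the ``losing your hat'' Lemma \ref{Losing hat} to promote the decomposition from an isogeny to a genuine isomorphism and to pin down the $j$-invariant.

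First I would verify that the endomorphism ring of $S_{\Delta}$ has maximal rank. By Proposition \ref{endomorphism isomorphism}, $\End(S_{\Delta}) \cong \C_{q}(V)_{\mathbb{Z}}$, and the latter is a full-rank lattice in $\C_{q}(V)$, which has complex dimension $2^{2k}$ and hence real dimension $2 \cdot 2^{2k} = 2 \cdot (2^{k})^{2}$. Since $\dim_{\C} S_{\Delta} = 2^{k}$, this is precisely the bound $2n^{2}$ in Proposition \ref{Decomposition}. Therefore $S_{\Delta}$ is already isogenous to $E^{\times 2^{k}}$ for some elliptic curve $E$ with complex multiplication.

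Next I would identify which elliptic curve occurs, and simultaneously upgrade the isogeny to an isomorphism, by producing a concrete automorphism of $S_{\Delta}$ to which Proposition \ref{Lange thm} applies. Consider the generator $i \cdot 1 \in \hat{\Gamma}^{c}_{q}(V)$; by Corollary \ref{Automorphisms of spinors} it corresponds to an automorphism $f := \hat{\rho}(i \cdot 1) \in \Aut(S_{\Delta})$ of order $4$. Applying Lemma \ref{Losing hat},
\[
\tau_{a}(f) \;=\; \tau_{a}(\hat{\rho}(i \cdot 1)) \;=\; \rho(i \cdot 1) \;=\; i \cdot \mathrm{id}_{\Delta},
\]
which is exactly the hypothesis of Proposition \ref{Lange thm} with $d = 4$ and $\zeta_{4} = i$. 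The proposition then yields $S_{\Delta} \cong E^{\oplus 2^{k}}$, where $E$ is the elliptic curve admitting an order-four automorphism; this is precisely the elliptic curve with $j$-invariant $1728$, namely $E_{i} = \C /(\Z \oplus i\cdot \Z)$.

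Finally, I would remark on why this decomposition respects the spinor-Abelian-variety structure: the isomorphism is equivariant for the $\langle i \rangle$-action on each factor, so Clifford multiplication by the imaginary scalar generator of $\hat{\Gamma}^{c}_{q}(V)$ descends factor-wise, and the remaining generators in $\hat{\Gamma}^{c}_{q}(V)$ permute and mix these factors compatibly with the product structure (this is the content of the statement in the introduction that $E_{i}^{\times 2^{k}}$ inherits Clifford multiplication from $S_{\Delta}$). The main obstacle I anticipate is purely bookkeeping: verifying that the rank count $2 \cdot 2^{2k} = 2(2^{k})^{2}$ actually matches the hypothesis of Proposition \ref{Decomposition} after the complexification/integral-lattice identifications of Section 3 — once that is in place, the existence of the order-four automorphism $f = \hat{\rho}(i \cdot 1)$ with $\tau_{a}(f) = i \cdot \mathrm{id}$ makes Proposition \ref{Lange thm} do all of the real work.
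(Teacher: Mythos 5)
Your proposal follows essentially the same route as the paper: rank of $\End(S_\Delta)\cong\C_q(V)_{\mathbb Z}$ plus Proposition \ref{Decomposition} to get the isogeny, then the order-four automorphism $\hat\rho(i)$ with $\tau_a(\hat\rho(i))=i\cdot\mathrm{id}_\Delta$ (via Corollary \ref{Automorphisms of spinors} and Lemma \ref{Losing hat}) feeding into Proposition \ref{Lange thm} to pin down $E_i$. The only cosmetic difference is in the last step: the paper makes the ``as a spinor Abelian variety'' claim precise simply by conjugating $\hat\rho$ by the isomorphism $f:S_\Delta\xrightarrow{\cong}E_i^{\times 2^k}$ to define $\rho^f=\mathrm{Ad}_f\circ\hat\rho$, whereas your closing remark about equivariance and factor-wise descent gestures at the same transport-of-structure without spelling it out.
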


\begin{proof}
Let $S_{\Delta}$ be a spinor Abelian variety of dimension $2^{k}$. From Proposition \ref{endomorphism isomorphism} we have that $\End(S_{\Delta})$ is isomorphic as a ring, and hence a free $\mathbb{Z}$-module, to the lattice $\C_{q}(V)_{\mathbb{Z}}$. Thus we can immediately conclude that the rank of the endomorphism ring as a free $\mathbb{Z}$-module is $2^{2k+1}$ (the same as that of $\C_{q}(V)_{\mathbb{Z}}$). Therefore by Proposition \ref{Decomposition} we immediately have that $S_{\Delta}$ is isogenous to the direct sum of $2^{k}$ copies of an elliptic curve with complex multiplication. We next show that this curve is of $j$-invariant $1728$.

By Lemma \ref{Losing hat} and Proposition \ref{endomorphism isomorphism} we have the following commutative diagram:   
\[
\begin{tikzcd}
\C_{q}(V)_{\mathbb{Z}} \arrow[r, "\hat{\rho}"]\arrow[d,"inc"]
& \End(S_{\Delta} )\arrow[d, "\tau_{a}"] \\
\C_{q}(V) \arrow[r, "\rho"]
&  \End(\Delta).
\end{tikzcd}
\]
From Corollary \ref{Automorphisms of spinors} we have the isomorphism  $\Aut(S_{\Delta})\cong \hat{\Gamma}_{q}^{c}(V)$. Hence for the automorphism $\hat{\rho}(i)\in \Aut(S_{\Delta})$ of order $4$, we have $\tau_{a}(\hat{\rho}(i))=\rho(inc(i))$, where $inc:\C_{q}(V)_{\mathbb{Z}}\hookrightarrow \C_{q}(V)$ is the inclusion homomorphism. Thus we have  $\tau_{a}(\hat{\rho}(i))=\rho(inc(i))=\rho(i)=i\cdot \rho(1)=i\cdot id_{\Delta}$. We have shown  that $S_{\Delta}$ has an automorphism of order $4$ whose analytic representation is $i\cdot id_{\Delta}$, and so by Proposition \ref{Lange thm} we have the isomorphism $S_{\Delta}\cong \gls{product of elliptic curves}:= \underbrace{E_{i}\times \ldots \times E_{i}}_{2^{k} \textrm{ times}}$ as  polarized Abelian varieties, where $E_{i}$ is the elliptic curve that admits automorphisms of order $4$; thus it must be of $j$-invariant $1728$. Therefore we have shown that $S_{\Delta}$ is fully decomposable as an Abelian variety. We still have to show that it is fully decomposable as a spinor Abelian variety. Defining the isomorphism $f:S_{\Delta}\xrightarrow {\cong} E_{i}^{\times 2^{k}}$, we can extend Clifford multiplication via $Ad_{f}: \End (S_{\Delta})\rightarrow \End(E_{i}^{\times 2^{k}})$, where $g\mapsto Ad_{f}(g)=f\circ g\circ f^{-1}$. Composing Clifford multiplication with the adjoint conjugation extends Clifford multiplication from $S_{\Delta}$ to $E_{i}^{\times 2^{k}}$, by $\gls{rho^f}:\C_{q}(V)_{\mathbb{Z}}\rightarrow \End(E_{i}^{\times 2^{k}})$, given by $\rho^{f}(h)=Ad_{f}(\hat{\rho}(h))=f\circ \hat{\rho}(h)\circ f^{-1}$ for a given lattice element $h\in \mathbb{C}_{q}(V)_{\mathbb{Z}}$. That is, for any $h\in\C_{q}(V)_{\mathbb{Z}}$ we have the following commutative diagram: 
\[
\begin{tikzcd}
S_{\Delta} \arrow[r, "\hat{\rho}_{h}"]
& S_{\Delta}  \arrow[d, "f"] \\
E_{i}^{\times 2^{k}} \arrow[u, "f^{-1}"] \arrow[r, red, "{\rho}^{f}_{h}" blue]
&  E_{i}^{\times 2^{k}}.
\end{tikzcd}
\]
This shows that we can naturally extend Clifford multiplication onto $E_{i}^{\times 2^{k}}$, making $E_{i}^{\times 2^{k}}$ a spinor Abelian variety.  Hence we have shown that $S_{\Delta}$ is fully decomposable not only as a PPAV, but also as a spinor Abelian variety.

\end{proof}
From Proposition \ref{full decomposition} we have the intrinsic property of $S_{\Delta}$ that all spinor Abelian varieties with Clifford multiplication $\hat{\rho}:\C_{q}(V)_{\mathbb{Z}}\rightarrow \End(S_{\Delta})$ are fully decomposable, as spinor Abelian varieties, as the product of $2^{k}$ copies of the elliptic curve $E_{i}$ of $j$-invariant $1728$. We now have the following immediate corollary when viewing $E_{i}^{\times 2^{k}}$ as a spinor Abelian variety.

\begin{cor}
For the spinor Abelian variety $E_{i}^{\times 2^{k}}$, its endomorphism ring is isomrophic to the integer subring $\C_{q}(V)_{\mathbb{Z}}$, and its group of automorphisms is isomorphic to the multiplicative group of generators of $\C_{q}(V)$. That is, $\End(E_{i}^{\times 2^{k}})\cong \C_{q}(V)_{\mathbb{Z}}$ and $\Aut(E_{i}^{\times 2^{k}})\cong \hat{\Gamma}^c_{q}(V)$.
\end{cor}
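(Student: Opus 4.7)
The plan is to deduce the corollary directly from the machinery already built up, using the isomorphism of spinor Abelian varieties constructed in the proof of Theorem \ref{full decomposition}. The key observation is that the decomposition $f:S_{\Delta}\xrightarrow{\cong} E_i^{\times 2^k}$ is not merely an isomorphism of polarized Abelian varieties, but of spinor Abelian varieties, meaning Clifford multiplication on $E_i^{\times 2^k}$ is defined precisely so that it intertwines with the Clifford multiplication on $S_\Delta$ via $f$. This makes $Ad_f$ the natural bridge between the two endomorphism structures.

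First I would set up the conjugation map $Ad_f:\End(S_\Delta)\to \End(E_i^{\times 2^k})$ given by $g\mapsto f\circ g\circ f^{-1}$, and note that because $f$ is an isomorphism of complex Abelian varieties, $Ad_f$ is a ring isomorphism. Then I would invoke Proposition \ref{endomorphism isomorphism} to identify $\End(S_\Delta)\cong \C_q(V)_\mathbb{Z}$ via $\hat{\rho}$. Composing gives the chain of isomorphisms
\[
\C_q(V)_\mathbb{Z}\xrightarrow{\hat{\rho}}\End(S_\Delta)\xrightarrow{Ad_f}\End(E_i^{\times 2^k}),
\]
and by the definition $\rho^f=Ad_f\circ \hat{\rho}$ from the proof of Theorem \ref{full decomposition}, this composition is exactly Clifford multiplication on $E_i^{\times 2^k}$. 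Thus $\rho^f$ itself realizes the isomorphism $\End(E_i^{\times 2^k})\cong \C_q(V)_\mathbb{Z}$.

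For the automorphism statement, I would restrict the above isomorphism to units. Since a ring isomorphism sends units to units, and the units of $\End(S_\Delta)$ are $\operatorname{Aut}(S_\Delta)$, the map $Ad_f$ restricts to an isomorphism $\operatorname{Aut}(S_\Delta)\xrightarrow{\cong}\operatorname{Aut}(E_i^{\times 2^k})$. Combined with Corollary \ref{Automorphisms of spinors}, which identifies $\operatorname{Aut}(S_\Delta)\cong \hat{\Gamma}^c_q(V)$ as the units of $\C_q(V)_\mathbb{Z}$, this immediately gives $\operatorname{Aut}(E_i^{\times 2^k})\cong \hat{\Gamma}^c_q(V)$.

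There is no substantial obstacle here: the corollary is essentially a formal consequence of transporting structure along $f$. The only subtle point, which I would mention briefly rather than belabor, is verifying that $Ad_f$ restricts properly to integral Clifford multiplication, i.e.\ that the image $\rho^f(\C_q(V)_\mathbb{Z})$ exhausts $\End(E_i^{\times 2^k})$; this follows because $f$ descends from a $\C$-linear isomorphism of covering spaces that carries the chosen full-rank lattice on $\Delta$ to the corresponding lattice on $\C^{2^k}$ defining $E_i^{\times 2^k}$, so lattice-preserving endomorphisms correspond bijectively under $Ad_f$.
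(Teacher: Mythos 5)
Your argument is correct and is exactly the route the paper takes: the paper's proof simply states that the corollary ``immediately follows from'' Proposition \ref{endomorphism isomorphism}, Theorem \ref{full decomposition}, and Corollary \ref{Automorphisms of spinors}, and your write-up is just an explicit unwinding of that citation via the conjugation map $Ad_f$ and restriction to units. No gap; you have filled in the details the paper leaves to the reader.
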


\begin{proof}
    This corollary  immediately follows from Propositions \ref{endomorphism isomorphism} and \ref{full decomposition} and Corollary \ref{Automorphisms of spinors}.
\end{proof}

\section{Future work}
We plan to address the following.
\begin{enumerate}

    \item Construct singular curves whose Jacobian is a spinor Abelian variety.
    \item Expand Clifford multiplication to products of elliptic curves of $j$-invariant $1728$. 

    \item Generate more concrete examples of spinor Abelian varieties by focusing on certain types of spinor spaces. Moreover, on strictly real spinor Abelian varieties, we would like to describe what it means for them to be of real, complex, and quaternionic type within this context.
\end{enumerate}

\section{Acknowledgments }
We would like to thank Alfonso Zamora Saiz and George Hitching for providing us with helpful comments and suggestions.

\end{document}